\providecommand{\U}[1]{\protect\rule{.1in}{.1in}}
\newtheorem{theorem}{Theorem}
\newtheorem{proposition}{Proposition}
\newtheorem{lemma}[proposition]{Lemma}
\newtheorem{conjecture}[theorem]{Conjecture}
\newtheorem{definition}[proposition]{Definition}
\newtheorem{example}{Example}
\newtheorem{notation}[example]{Notation}
\newtheorem{remark}[example]{Remark}
\begin{document}

\title{Number of Real Critical Points of \\Cyclotomic Polynomials}
\author{Hoon Hong \thanks{Department of Mathematics, North Carolina State University
(hong@ncsu.edu).}
\and Andrew J. Sommese \thanks{Department of Applied and Computational Mathematics
and Statistics, University of Notre Dame (sommese@nd.edu).} }
\date{}
\maketitle

\begin{abstract}
We study the number of real critical points of a cyclotomic polynomial
$\Phi_{n}(x)$, that is, the real roots of $\Phi_{n}^{\prime}(x)$. As usual,
one can, without losing generality, restrict $n$ to be the product of distinct
odd primes, say $p_{1}<\cdots<p_{k}$. We show that if the primes are
\textquotedblleft sufficiently separated" then there are exactly $2^{k}-1$
real roots of $\Phi_{n}^{\prime}(x)$ and each of them is simple.

\end{abstract}

\quad\textbf{Key Words}: cyclotomic polynomials, critical points, real analysis


\section{Introduction}

The $n$-th cyclotomic polynomial $\Phi_{n}$ is defined as the monic polynomial
whose complex roots are the primitive $n$-th~roots of unity. The cyclotomic
polynomials play fundamental roles in number theory and algebra and their
applications. Thus their various properties have been extensively
investigated: to cite a few, \cite{BL,BE3,BG1,BZ1,VA,VA3,VA2,BAT2,VA4,BZ2,LE}%
~on coefficient size, \cite{VA5,KO,Fi,FO,GA-Mo,GA-MO2,Mor2003,Mor2012,Ji}~on
realizability, \cite{BAC,ED,KA1,KA2,ZH17}~on flatness,
\cite{BZ4,GA-MO2,CCLMS2016}~on jumps, \cite{CA1,BZ4}~on hamming weight,
\cite{HLLP,HLLP11a,Moree2014,Zhang16,CCLMS2016,AAHL2019}~on maximum gap in
exponents, \cite{AM2,AM1}~on efficiently computing coefficients, and so on.

Another natural way to understand a polynomial is to study its roots and the
roots of its derivative: count (how many roots) and location (where are the
roots). Thus, in this paper, we study roots of cyclotomic polynomial and of
its derivative. From now on, we will, without losing generality, restrict $n$
to be the product of distinct odd primes, say $p_{1}<\cdots<p_{k}$.

For $\Phi_{n}$, we obviously know everything about the roots from its
definition: there are $\varphi_{n}$ complex roots, they all lie on the unit
circle in the complex plane, and there are no real roots. Thus we naturally go
to the next object $\Phi_{n}^{\prime}$ and study its roots, namely critical
points. The number of complex roots of $\Phi_{n}^{\prime}$ is obviously
$\varphi_{n}-1$. However their location is not obvious. The Gauss-Lucas
theorem~\cite{Lucas1874} only tells us that they are inside the unit circle.
Some initial computational experiments suggest that the complex roots are
located in several bands when the primes are sufficiently separated. This band
structure will be the topic of forthcoming paper.

\begin{figure}[pt]
\centering
\begin{tabular}
[c]{cc}%
$\Phi_{3\cdot5 \cdot7}$ & $\Phi_{3\cdot5 \cdot59}$\\
\includegraphics[height=2in,width=3.0in] {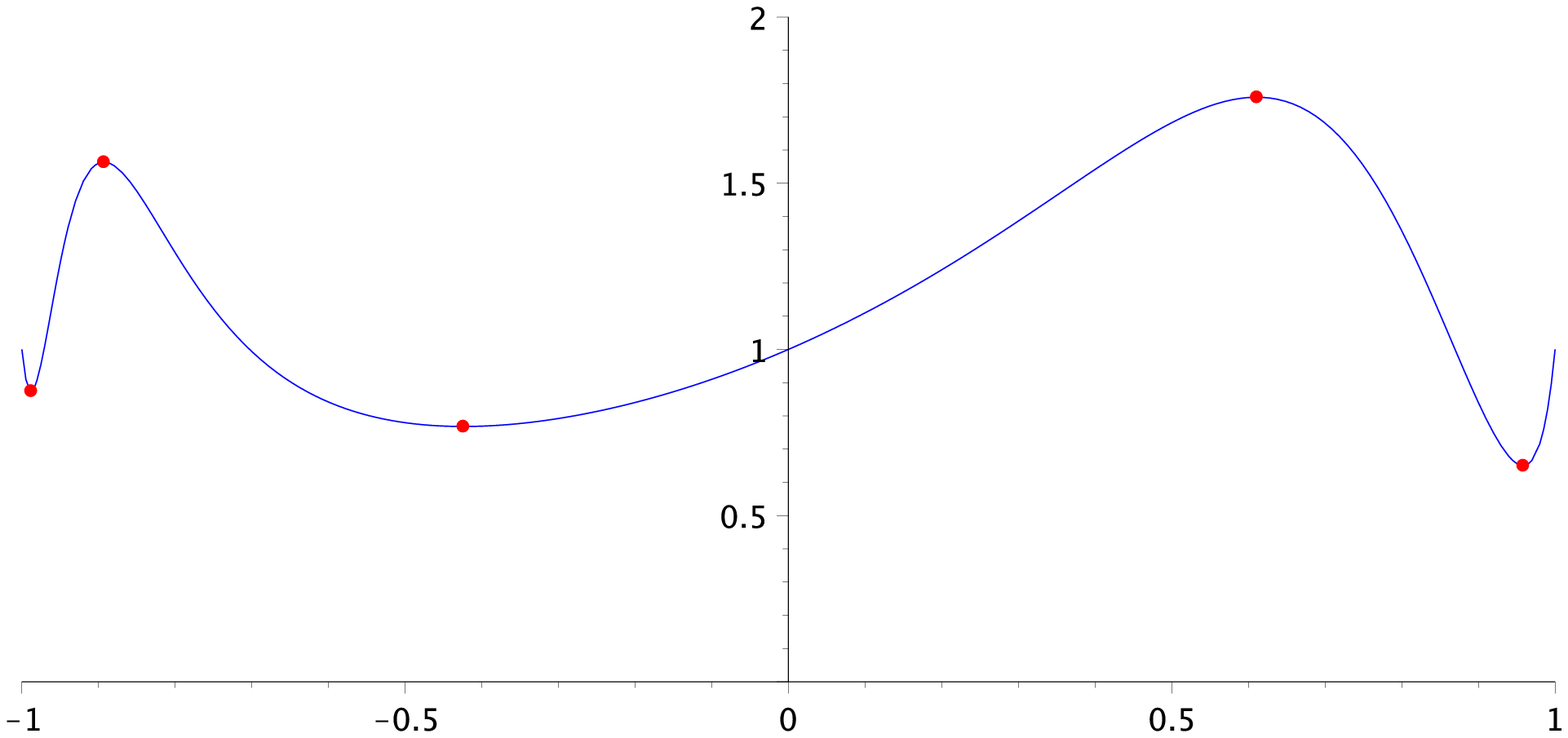} &
\includegraphics[height=2in,width=3.0in] {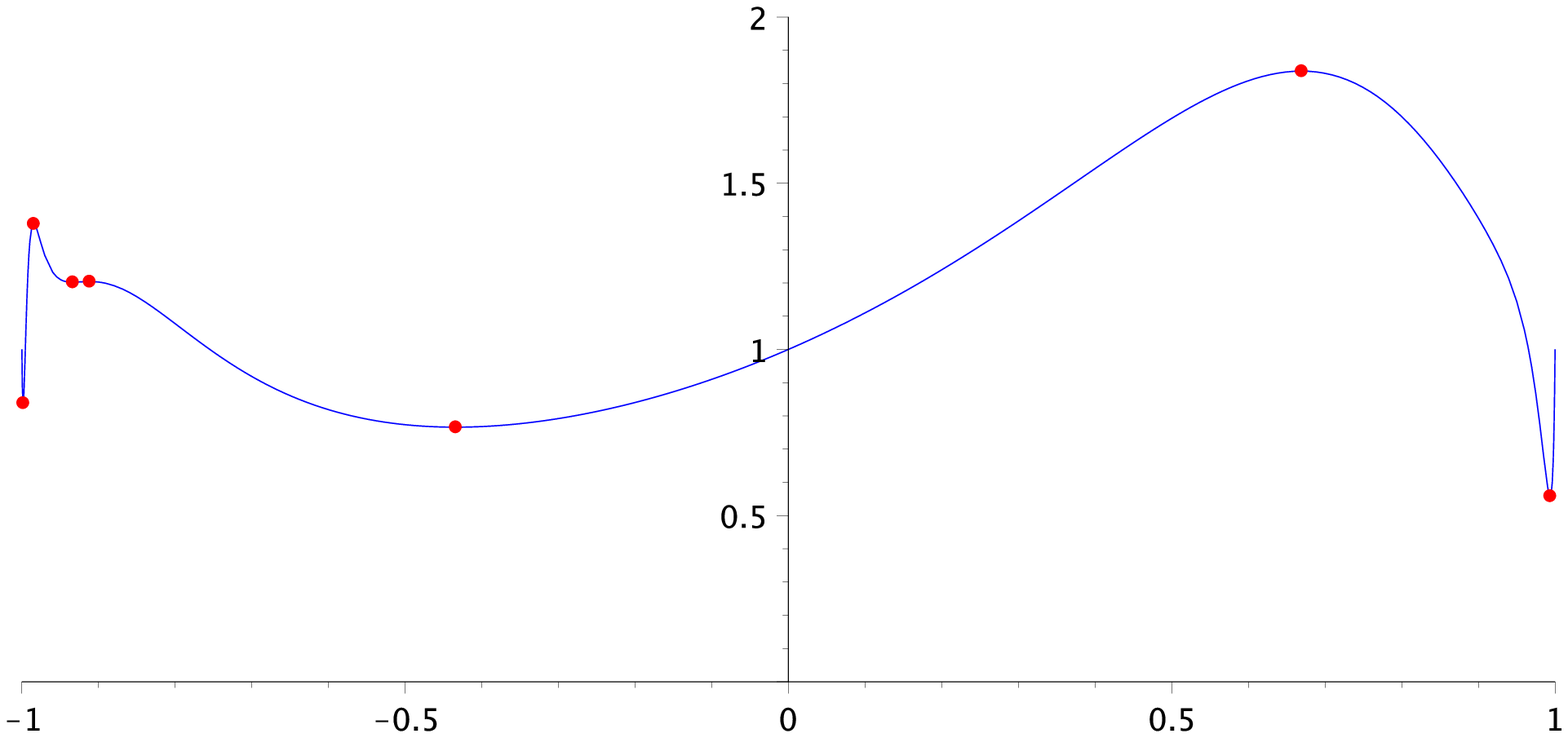}
\end{tabular}
\caption{Real critical points of cyclotomic polynomials}%
\label{fig:ex}%
\end{figure}

In this paper, we address the real roots of $\Phi_{n}^{\prime}$. See
Figure~\ref{fig:ex} for two small examples. It is not obvious how many there
are and where they are. The main contribution of this paper is to show that,
if the primes are \textquotedblleft sufficiently separated," then there are
exactly $2^{k}-1$ real roots of $\Phi_{n}^{\prime}(x)$ and each of them is
simple. It was surprising to us that it does not depend on the primes at all
as long as they are sufficiently separated. As a consequence, one can have
arbitrary large $n$, with a fixed small number of real critical points
(\textquotedblleft simple graph\textquotedblright). Concerning the location of
the real roots of $\Phi_{n}^{\prime}$, we give a conjecture.

We briefly discuss a proof technique used in proving the main counting result.
We begin by observing that several ``standard'' approaches (from algebra and
analysis) were not suitable. Then we describe a new proof technique that we developed.

\begin{enumerate}
\item Algebraic approach: There are classical algebraic algorithms (such as
Sturm~\cite{Sturm1829}, Sturm-Habicht~\cite{Vega1998},
Hermite~\cite{Hermite1856}) for counting number of real roots of a polynomial
(see a nice monograph \cite{BasuBook}). Hence, one can determine the number of
real roots for $\Phi_{n}^{\prime}$ for each given~$n$. However, this approach
is not useful since it would require executing the algorithm for infinitely
many $n$ values. One might consider to study the ``structure'' of steps of the
algorithm (or underlying ideas) in the hope of finding some pattern (without
executing the algorithm) that could yield a proof. We tried this approach
without success, mainly because the structure was too complicated to comprehend.

\item Analytic approach: It is easy to write down $\frac{\Phi_{np}^{\prime}%
}{\Phi_{np}}$ as a difference of two rational functions where one of them goes
to zero when $p$ is sufficiently large. There are classical analytic tools for
such a situation: Rouch\'{e}'s and Hurwitz's theorems~\cite[\S 3.45]{T1958}.
We tried this approach and found that it does not give sufficient information
to prove the counting theorem.

\item Thus we developed a new proof technique. We first introduce a suitable
``proxy'' $\mathcal{D}_{n}$ of $\Phi_{n}^{\prime}$, that behaves the same as
$\Phi_{n}^{\prime}$ with respect to real roots. Then we show that
$\mathcal{D}_{np}$ can be written as a difference of two polynomials such
that, for sufficiently large~$p$, their graphs are configured nicely with
respect each other, so that the tasks of counting the intersections and
showing their transversality become manageable.
\end{enumerate}

The paper is structured as follows. In Section \ref{sec:pre}, we will review
basic notations/ notions and well known or easy results on them. In
Section~\ref{sec:result}, we state precisely the main result on counting. In
Section~\ref{sec:proof}, we give a proof of the main theorem . The proof will
be an induction (recursion) on the number of primes. Thus it is divided into
three subsections: initial condition, recurrence, and solving the recurrence.
The key is proving a recurrence formula. The proof will be given in geometric
languages since it brings out the intuition/insights underlying the proof more
clearly. In Section~\ref{sec:rig_proof}, we give a formal/rigorous proof of a
key recursion formula to ensure its correctness. In Section \ref{sec:conj}, we
list several conjectures as open problems.

\section{Preliminaries}

\label{sec:pre}We will review basic notations/ notions and well known or easy
results on them. Most of them can be found in any standard textbooks on number
theory. We will refer to those notation/notions and results frequently
throughout the paper without explicit reference.

\begin{definition}
[Cyclotomic polynomial]The $n$-th cyclotomic polynomial, $\Phi_{n}\left(
x\right)  $, is defined
\[
\Phi_{n}\left(  x\right)  =\prod_{\substack{1\leq j\leq n\\\gcd\left(
j,n\right)  =1}}\left(  x-e^{2\pi i\frac{j}{n}}\right)
\]
The degree of $\Phi_{n}\left(  x\right)  $ is denoted by $\varphi\left(
n\right)  $.
\end{definition}

\begin{proposition}
[Properties]\label{prp:well known}We list several well known properties
without proofs.

\begin{enumerate}
\item $\Phi_{n}\left(  x\right)  =\cdots+x+1\ \ \ $if $n$ is the product of
odd number of distinct odd primes.

\item $\Phi_{n}\left(  x\right)  =\cdots-x+1\ \ \ $if $n$ is the product of
even number of distinct odd primes.

\item $\Phi_{n}\left(  x\right)  $ is palindromic when $n\geq2$.

\item $\Phi_{n}(x)=\Phi_{\hat{n}}(x^{\frac{n}{\hat{n}}})\ \ $if $\hat{n}$ is
the radical of $n$.

\item $\Phi_{2n}(x)=\pm\Phi_{n}(-x)$ \ if $n$ is odd.

\item $\Phi_{np}\left(  x\right)  =\frac{\Phi_{n}\left(  x^{p}\right)  }%
{\Phi_{n}\left(  x\right)  }\ \ $if $p$ is a prime relatively prime to $n$.
\end{enumerate}
\end{proposition}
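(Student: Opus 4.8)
The plan is to derive all six properties from the defining product together with the fundamental factorization $x^n - 1 = \prod_{d \mid n} \Phi_d(x)$, which I would first establish by partitioning the $n$-th roots of unity according to their exact order $d \mid n$. M\"obius inversion of this relation yields the closed form
\[
\Phi_n(x) = \prod_{d \mid n} (x^d - 1)^{\mu(n/d)},
\]
where $\mu$ is the M\"obius function; this formula is what drives the multiplicative identities, so I would record it at the outset.

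For the palindrome property~(3) I would use that the map $\zeta \mapsto \zeta^{-1} = \bar\zeta$ permutes the primitive $n$-th roots of unity; then $x^{\varphi(n)} \Phi_n(1/x) = \prod_\zeta (1 - \zeta x)$, and pulling $-\zeta$ out of each factor reduces the claim to $\prod_\zeta(-\zeta) = \Phi_n(0) = 1$, which holds for $n \geq 2$. Property~(5) follows from the observation that for odd $n$ the map $\zeta \mapsto -\zeta$ is a bijection from primitive $n$-th roots to primitive $2n$-th roots (since $-1$ has order $2$ and $\gcd(2,n)=1$), giving $\Phi_{2n}(x) = \prod_\zeta(x+\zeta) = (-1)^{\varphi(n)}\Phi_n(-x)$, with the sign absorbed into the $\pm$. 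For the recursion~(6) with $p \nmid n$ I would work directly from the M\"obius formula: the divisors of $np$ are exactly $d$ and $pd$ for $d \mid n$, and multiplicativity gives $\mu(np/d) = -\mu(n/d)$, so collecting factors yields $\Phi_{np}(x) = \prod_{d\mid n}(x^{pd}-1)^{\mu(n/d)} \big/ \prod_{d\mid n}(x^d-1)^{\mu(n/d)} = \Phi_n(x^p)/\Phi_n(x)$. Property~(4) then follows by iterating the companion identity $\Phi_{Np}(x) = \Phi_N(x^p)$, valid when $p \mid N$ and proved the same way, stripping off one repeated prime at a time until only the radical $\hat n$ remains.

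The properties requiring the most careful bookkeeping are the low-order terms~(1) and~(2). Here I would expand the M\"obius product near $x = 0$: for a squarefree product of $k$ distinct odd primes every factor $(x^d-1)^{\mu(n/d)}$ has constant term $(-1)^{\mu(n/d)} = -1$, and only the factor $d = 1$, namely $(x-1)^{\mu(n)}$, contributes to the coefficient of $x$ (every factor with $d \geq 2$ being $-1 + O(x^2)$). Since $n$ has $2^k$ divisors, the constant term of $\Phi_n$ is $(-1)^{2^k} = 1$, confirming the trailing $+1$; the coefficient of $x$ equals the coefficient of $x$ in $(x-1)^{\mu(n)}$ multiplied by $(-1)^{2^k - 1} = -1$, and with $\mu(n) = (-1)^k$ this works out to $+1$ when $k$ is odd and $-1$ when $k$ is even, matching the two cases. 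The main obstacle throughout is not conceptual but rather keeping the various signs consistent --- the parities of $\varphi(n)$, of $\mu(n)$, and of the divisor count $2^k$ --- which is precisely why these facts are quoted here as known.
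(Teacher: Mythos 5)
Your proposal is correct, but there is nothing in the paper to compare it against: the paper explicitly lists these properties \textquotedblleft without proofs,\textquotedblright\ treating them as standard textbook facts, so your write-up supplies a derivation the authors deliberately omitted. Your unified route through the factorization $x^n-1=\prod_{d\mid n}\Phi_d(x)$ and its M\"obius inversion $\Phi_n(x)=\prod_{d\mid n}(x^d-1)^{\mu(n/d)}$ is the standard one, and every step checks out: (6) follows from pairing the divisors $d$ and $pd$ of $np$ and using $\mu(np/d)=-\mu(n/d)$; (4) from iterating the companion identity $\Phi_{Np}(x)=\Phi_N(x^p)$ for $p\mid N$; (3) and (5) from the permutations $\zeta\mapsto\zeta^{-1}$ and $\zeta\mapsto-\zeta$ of the relevant primitive roots; and (1)--(2) from expanding the M\"obius product modulo $x^2$, where the divisor count $2^k$ and $\mu(n)=(-1)^k$ yield exactly the claimed constant term $+1$ and linear coefficient $(-1)^{k+1}$. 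Two points are worth making explicit to be airtight: first, the value $\Phi_n(0)=1$ for $n\ge 2$, which your proof of (3) invokes, itself requires a short induction from $-1=\prod_{d\mid n}\Phi_d(0)$ applied to the identity $x^n-1=\prod_{d\mid n}\Phi_d(x)$ at $x=0$; second, your manipulation of the factors $(x^d-1)^{-1}$ in (1)--(2) should be framed as a computation in the formal power series ring $\mathbb{Z}[[x]]$, which is legitimate because each such factor has unit constant term $-1$, and $\Phi_n$ is then recovered as the unique polynomial whose series expansion equals that product.
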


\begin{proposition}
[Special values]\label{prp:special-values}Let $p$ be an odd prime. We list
several well known special values without proofs.%
\[%
\begin{array}
[c]{c||c|c|l}%
\Phi_{p}^{\left(  d\right)  }\left(  x\right)  & x=-1 & x=0 &
x=+1\\\hline\hline
d=0 & 1 & 1 & p\\\hline
d=1 & -\frac{1}{2}\left(  p-1\right)  & 1 & \frac{1}{2}p\left(  p-1\right)
\\\hline
d=2 & \frac{1}{2}\left(  p-1\right)  ^{2} & 2 & \frac{1}{3}p\left(
p-1\right)  \left(  p-2\right) \\\hline
d=3 & -\frac{1}{4}\left(  p-1\right)  \left(  p-3\right)  \left(  2p-1\right)
& 6 & \frac{1}{4}p\left(  p-1\right)  \left(  p-2\right)  \left(  p-3\right)
\end{array}
\]

\end{proposition}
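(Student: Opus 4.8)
The plan is to deduce the whole table from the single identity
\[
(x-1)\,\Phi_p(x) = x^p - 1,
\]
which is just Property~6 of Proposition~\ref{prp:well known} taken with $n=1$ (equivalently, $\Phi_p(x)=1+x+\cdots+x^{p-1}$). Differentiating this identity $d$ times with Leibniz's rule and using that the factor $x-1$ is linear, all but two terms on the left collapse, so that for $1\le d\le p$ one obtains the master relation
\[
(x-1)\,\Phi_p^{(d)}(x) + d\,\Phi_p^{(d-1)}(x) = p(p-1)\cdots(p-d+1)\,x^{\,p-d},
\]
the right-hand side being the $d$-th derivative of $x^p-1$. Evaluating this relation at each of $x=-1,0,+1$ turns it into a first-order recurrence in $d$ for the corresponding column of the table, with base cases $\Phi_p(-1)=1$, $\Phi_p(0)=1$, $\Phi_p(1)=p$ read directly off the geometric-series form (for $x=-1$ the alternating sum $1-1+\cdots+1$ collapses to $1$ precisely because $p$ is odd).

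Two of the three columns then fall out immediately, because one of the two left-hand terms vanishes. At $x=0$ the factor $x^{\,p-d}$ annihilates the right-hand side (for $d<p$), leaving $\Phi_p^{(d)}(0)=d\,\Phi_p^{(d-1)}(0)$, whence $\Phi_p^{(d)}(0)=d!$ and the column reads $1,1,2,6$. At $x=+1$ the factor $x-1$ annihilates the leading term, leaving $d\,\Phi_p^{(d-1)}(1)=p(p-1)\cdots(p-d+1)$, i.e.\ $\Phi_p^{(d)}(1)=d!\binom{p}{d+1}$, which unrolls to $p,\ \tfrac12 p(p-1),\ \tfrac13 p(p-1)(p-2),\ \tfrac14 p(p-1)(p-2)(p-3)$.

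I expect the only genuine work to lie in the column $x=-1$, where neither left-hand term drops out. Using the oddness of $p$ to rewrite $(-1)^{p-d}=-(-1)^{d}$, the master relation becomes the inhomogeneous recurrence
\[
\Phi_p^{(d)}(-1) = \tfrac12\Bigl(d\,\Phi_p^{(d-1)}(-1) + (-1)^{d}\,p(p-1)\cdots(p-d+1)\Bigr).
\]
The hard part, such as it is, is that this recurrence does not telescope to a one-term answer: one must carry the alternating forcing term and keep its sign straight. Unrolling from $\Phi_p(-1)=1$ produces $-\tfrac12(p-1)$, then $\tfrac12(p-1)^2$, and then $-\tfrac14(p-1)(p-3)(2p-1)$, the final factorization of $-2p^2+7p-3$ being the one step that needs a moment's algebra. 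Since the proposition is flagged as well known, I would in the end record only the master relation and its three specializations, leaving the short unrolling to the reader.
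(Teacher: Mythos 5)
Your proposal is correct, and there is nothing in the paper to compare it against: the authors state Proposition~\ref{prp:special-values} explicitly ``without proofs,'' as a list of well-known values, so your derivation supplies a self-contained justification that the paper omits. The master relation obtained from Leibniz applied to $(x-1)\Phi_p(x)=x^p-1$,
\[
(x-1)\,\Phi_p^{(d)}(x) + d\,\Phi_p^{(d-1)}(x) = p(p-1)\cdots(p-d+1)\,x^{\,p-d},
\]
is the right engine, and all three specializations check out: at $x=0$ it gives $\Phi_p^{(d)}(0)=d\,\Phi_p^{(d-1)}(0)$ for $d<p$, hence $d!$; at $x=1$ it gives $\Phi_p^{(d)}(1)=\frac{1}{d+1}p(p-1)\cdots(p-d)$; and at $x=-1$ your inhomogeneous recurrence unrolls as claimed, the last step being $3(p-1)-2p(p-2)=-2p^2+7p-3=-(2p-1)(p-3)$, which yields $-\frac14(p-1)(p-3)(2p-1)$. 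One point your proof brings out, and which you should record as a hypothesis: the $x=0$ column equals $d!$ only under your restriction $d<p$. For $p=3$ the table's entry $\Phi_3^{(3)}(0)=6$ is actually false, since $\Phi_3(x)=x^2+x+1$ has $\Phi_3^{(3)}\equiv 0$; your recurrence, applied honestly at $d=p=3$ (where the right-hand side $6x^{p-d}$ no longer vanishes), returns the correct value $0$. So your argument is not only valid but slightly more careful than the statement it proves; add the proviso $d\le p-1$ (equivalently $p\ge 5$ for the full table) when you write it up.
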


\begin{notation}
[Counts]We will use the following notations throughout the paper.

\begin{enumerate}
\item $N_{n}$\ stands for the number of real roots of $\Phi_{n}^{\prime}$,
counting multiplicities.

\item $N_{n}^{-},N_{n}^{0},\ N_{n}^{+}\ $stand for the number of negative,
zero, positive real roots of $\Phi_{n}^{\prime}$, counting multiplicities, respectively.
\end{enumerate}
\end{notation}

\begin{proposition}
[Reduction]We have the following reductions.

\begin{enumerate}
\item When $n$ is not squarefree.

$N_{n}^{0}=1,\ \ \ N_{n}^{+}=N_{\hat{n}}^{+},\ \ N_{n}^{-}=\left\{
\begin{array}
[c]{cc}%
N_{\hat{n}}^{-} & \text{if }n/\hat{n}\ \text{is odd}\\
N_{\hat{n}}^{+} & \text{if }n/\hat{n}\ \text{is even}%
\end{array}
\right.  $

\item When $n$ is squarefree and even

$N_{n}^{0}=N_{n/2}^{0},\ \ \ N_{n}^{+}=N_{n/2}^{-},\ \ N_{n}^{-}=N_{n/2}^{+}$
\end{enumerate}
\end{proposition}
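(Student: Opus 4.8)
The plan is to obtain both reductions directly from the substitution identities $\Phi_n(x)=\Phi_{\hat n}(x^{n/\hat n})$ and $\Phi_{2n}(x)=\pm\Phi_n(-x)$ of Proposition~\ref{prp:well known}, via the chain rule. In each case I will rewrite $\Phi_n$ as a composition of $\Phi$ at a smaller index with a simple real substitution, differentiate, and then reduce the counting of the real roots of $\Phi_n'$ (split into negative, zero, and positive) to the corresponding counts for the smaller index, by tracking how that substitution acts on the real line --- in particular how it permutes the three sign classes and whether it preserves multiplicities.

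For item (1), set $m=n/\hat n$, so $m\ge 2$ because $n$ is not squarefree. Differentiating $\Phi_n(x)=\Phi_{\hat n}(x^m)$ gives
\[
\Phi_n'(x)=m\,x^{m-1}\,\Phi_{\hat n}'(x^m).
\]
Since $m\ge 2$, the factor $x^{m-1}$ shows that $x=0$ is a root of $\Phi_n'$ and is its only critical point at the origin, so $N_n^0=1$. For $x\ne 0$ the first factor is nonzero, hence $\Phi_n'(x)=0$ iff $x^m$ is a nonzero real root of $\Phi_{\hat n}'$; thus the nonzero real critical points of $\Phi_n$ are exactly the real $m$-th roots of the nonzero real critical points of $\Phi_{\hat n}$. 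I would then analyze $x\mapsto x^m$ on $\mathbb{R}\setminus\{0\}$ by the parity of $m$. If $m$ is odd the map is a sign-preserving bijection, so each nonzero root $r$ of $\Phi_{\hat n}'$ lifts to a single real root of the same sign, giving $N_n^+=N_{\hat n}^+$ and $N_n^-=N_{\hat n}^-$. If $m$ is even then for $r>0$ both $\pm r^{1/m}$ map to $r$, while $r<0$ has no real preimage, so each positive root $r$ lifts to exactly one positive and one negative root of $\Phi_n'$ and negative roots lift to none; hence $N_n^+=N_{\hat n}^+$ and $N_n^-=N_{\hat n}^+$. Because $x\mapsto x^m$ has nonvanishing derivative away from $0$, the substitution is a local diffeomorphism there, so a root of multiplicity $\mu$ lifts to roots of multiplicity $\mu$ and the equalities hold as counts with multiplicity.

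For item (2), write $n=2\cdot(n/2)$ with $n/2$ odd, so that $\Phi_n(x)=\pm\,\Phi_{n/2}(-x)$; differentiating yields $\Phi_n'(x)=\mp\,\Phi_{n/2}'(-x)$. Hence $x$ is a root of $\Phi_n'$ iff $-x$ is a root of $\Phi_{n/2}'$, and since $x\mapsto -x$ is a sign-reversing linear bijection of $\mathbb{R}$ that fixes the origin and preserves multiplicities, reading off the three sign classes immediately gives $N_n^0=N_{n/2}^0$, $N_n^+=N_{n/2}^-$, and $N_n^-=N_{n/2}^+$.

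No step is deep: the whole content sits in the two substitution identities. The only places that require care are in item (1) --- keeping the origin separate from the nonzero roots, so that the zero critical point is recorded in $N_n^0$ and does not leak into $N_n^\pm$, and verifying that $x\mapsto x^m$ transports multiplicities faithfully at nonzero points so that the parity case split lines up correctly with the sign classes. I expect this multiplicity bookkeeping, rather than any conceptual difficulty, to be the main thing to get exactly right.
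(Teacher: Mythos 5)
Your proposal is correct and takes exactly the paper's approach: the paper's entire proof reads ``Immediate from differentiating Proposition [Properties]:4--5,'' and you have simply supplied the sign-class and multiplicity bookkeeping that the authors leave implicit. The only caveat (inherited from the statement itself, not a flaw in your argument relative to the paper) is that at $x=0$ the factor $x^{m-1}$ with $m=n/\hat{n}$ makes the origin a root of $\Phi_{n}^{\prime}$ of multiplicity $m-1$, so $N_{n}^{0}=1$ is a count of distinct zero roots rather than a count with multiplicity once $m>2$.
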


\begin{proof}
Immediate from differentiating Proposition \ref{prp:well known}:4-5.
\end{proof}

\begin{remark}
Hence it suffices to restrict our study to the case when $n$ is the product of
distinct odd primes.
\end{remark}

\begin{proposition}
[Parity]Let $n$ be a product of $k$ distinct odd primes.
\[
N_{n}^{-}=\left\{
\begin{array}
[c]{ll}%
\text{odd} & \text{if }k\ \text{is odd}\\
\text{even} & \text{if }k\ \text{is even}%
\end{array}
\right.  \ \ \ \ \ \ N_{n}^{0}=0\ \ \ \ \ \ \ N_{n}^{+}=\left\{
\begin{array}
[c]{ll}%
\text{even} & \text{if }k\ \text{is odd}\\
\text{odd} & \text{if }k\ \text{is even}%
\end{array}
\right.
\]

\end{proposition}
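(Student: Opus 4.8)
The plan is to read off all three parities from the signs of $\Phi_n'$ at the endpoints of the two relevant intervals, $(-\infty,0)$ and $(0,+\infty)$, using the elementary sign-change principle for real polynomials. Concretely, I would first record the following fact: if $f$ is a real polynomial and $a<b$ with $f(a)f(b)\neq 0$, then the number of real roots of $f$ in $(a,b)$, counted with multiplicity, is even precisely when $f(a)$ and $f(b)$ have the same sign, and odd otherwise. This holds because, writing $f(x)=c\prod_i (x-r_i)^{m_i}\prod_j\lvert x-\rho_j\rvert^2$ with $r_i$ the real roots and $\rho_j,\bar\rho_j$ the complex-conjugate pairs, one has $\operatorname{sign}f(x)=\operatorname{sign}(c)\prod_i \operatorname{sign}(x-r_i)^{m_i}$ away from the roots; crossing a real root $r_i$ from left to right multiplies this by $(-1)^{m_i}$, so $\operatorname{sign}f(b)/\operatorname{sign}f(a)=(-1)^{M}$ where $M=\sum_{r_i\in(a,b)}m_i$ is exactly the multiplicity-counted root total in $(a,b)$.

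Next I would compute the three boundary signs needed for $f=\Phi_n'$. For the value at $0$, Proposition~\ref{prp:well known} gives $\Phi_n(x)=\cdots\pm x+1$, the linear coefficient being $+1$ when $k$ is odd and $-1$ when $k$ is even; hence $\Phi_n'(0)=\pm 1$ with the same sign convention. In particular $\Phi_n'(0)\neq 0$, which immediately yields $N_n^0=0$ and also fixes the common sign of $\Phi_n'$ at $0^-$ and $0^+$ (namely $+$ for $k$ odd, $-$ for $k$ even). For the behaviour at infinity, $\Phi_n$ is monic of degree $\varphi(n)$, so $\Phi_n'$ has leading term $\varphi(n)\,x^{\varphi(n)-1}$ with positive leading coefficient; thus $\Phi_n'(x)\to+\infty$ as $x\to+\infty$. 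Since $n$ is a product of $k\ge 1$ distinct odd primes, $\varphi(n)=\prod_i(p_i-1)$ is even, so $\varphi(n)-1$ is odd and $\Phi_n'(x)\to-\infty$ as $x\to-\infty$.

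Finally I would assemble the parities. On $(0,+\infty)$ the sign at $0^+$ is $+$ for $k$ odd and $-$ for $k$ even, while the sign at $+\infty$ is $+$; the sign-change principle then gives $N_n^+$ even for $k$ odd and odd for $k$ even, as claimed. On $(-\infty,0)$ the sign at $-\infty$ is $-$ and the sign at $0^-$ is $+$ for $k$ odd and $-$ for $k$ even; this gives $N_n^-$ odd for $k$ odd and even for $k$ even. I do not anticipate a serious obstacle: the only points requiring care are verifying that the sign-change principle counts multiplicities correctly (handled by the $(-1)^{m_i}$ bookkeeping above) and confirming that $\varphi(n)$ is even so that the derivative has odd degree; both are routine once stated.
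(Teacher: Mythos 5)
Your proof is correct, but it follows a genuinely different route from the paper's. The paper's proof derives the parity of $N_n^+$ from Descartes' rule of signs: since $\Phi_n$ is palindromic (Proposition~\ref{prp:well known}:3) its coefficient sign-variation count is even, the variation count of $\Phi_n'$ is then even or odd according as $k$ is odd or even (because differentiating drops the constant term $+1$, and the linear coefficient is $\pm 1$ by Proposition~\ref{prp:well known}:1--2), and Descartes' rule transfers that parity to $N_n^+$; the parity of $N_n^-$ is then obtained \emph{indirectly}, from $N_n^0=0$ and the observation that the total count $N_n$ is odd. You instead read both parities directly off boundary signs: the value $\Phi_n'(0)=\pm 1$ and the end behaviour forced by the odd degree $\varphi(n)-1$ and positive leading coefficient, fed into the elementary sign-change parity principle applied separately on $(-\infty,0)$ and $(0,+\infty)$. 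Your route is more self-contained — it avoids Descartes' rule and palindromicity entirely, treats $N_n^-$ and $N_n^+$ symmetrically rather than getting one by subtraction, and your justification of the multiplicity-counting sign principle (the $(-1)^{m_i}$ bookkeeping over the real-root factors) is exactly the fact the paper leaves implicit in ``$N_n$ is odd.'' What the paper's approach buys is brevity given the standard toolkit, since Descartes' rule is quoted rather than proved, and it reuses structural facts about cyclotomic polynomials already catalogued in its preliminaries. Both arguments ultimately rest on the same two computations — the linear coefficient $\pm1$ of $\Phi_n$ and the evenness of $\varphi(n)$ — so the difference is one of machinery rather than substance, and your version would serve as a complete replacement proof.
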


\begin{proof}
Immediate from Descartes rule of sign, Proposition \ref{prp:well known}:1-3
and the obvious fact that $N_{n}$\ is odd.
\end{proof}

\section{Main Results}

\label{sec:result}In this section, we state precisely the main results. From
now on, let $n=p_{1}\cdots p_{k}$ where $p_{i}\ $are odd primes such that
$p_{1}<\cdots<p_{k}$. Recall that $N_{n}$ stands for the number of real
critical points of $\Phi_{n}^{\prime}$.

\begin{theorem}
[Counting]\label{thm:separated primes}Suppose that the primes are
sufficiently\ separated. Then
\[
N_{n}=2^{k}-1
\]
and each real root is simple. (See Remark \ref{rem:formal} for a precise
meaning of \textquotedblleft sufficiently separated\textquotedblright)
\end{theorem}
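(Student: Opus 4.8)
The plan is to prove the counting theorem by induction on $k$, the number of prime factors, following the recursive structure the authors advertise. The base case is $k=1$, where $\Phi_p'$ has $\varphi(p)-1 = p-2$ complex roots; from the Special Values table (Proposition \ref{prp:special-values}) one reads off $\Phi_p'(-1) = -\tfrac{1}{2}(p-1) < 0$ and $\Phi_p'(+1) = \tfrac{1}{2}p(p-1) > 0$, while $\Phi_p'(0)=1>0$. Combined with the Parity proposition (for $k=1$ odd: $N_p^-$ odd, $N_p^0=0$, $N_p^+$ even) and the fact that $\Phi_p(x) = 1 + x + \cdots + x^{p-1}$ is a geometric-type polynomial with tightly controlled behavior, I would argue $N_p^- = 1$ and $N_p^+ = 0$, giving $N_p = 1 = 2^1 - 1$, each root simple. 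Pinning down that there is exactly one negative critical point (and no positive one) is the main content of the base case and likely needs a direct sign/monotonicity argument on $\Phi_p'$ over $(-\infty,0)$.

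For the inductive step, the heart of the matter is a recurrence relating $N_{np}$ to $N_n$ when a new, much larger prime $p$ is adjoined. Here I would exploit Proposition \ref{prp:well known}:6, namely $\Phi_{np}(x) = \Phi_n(x^p)/\Phi_n(x)$, and differentiate logarithmically to write
\[
\frac{\Phi_{np}'(x)}{\Phi_{np}(x)} = p\,x^{p-1}\,\frac{\Phi_n'(x^p)}{\Phi_n(x^p)} - \frac{\Phi_n'(x)}{\Phi_n(x)}.
\]
Since $\Phi_{np}$ has no real zeros, the real critical points of $\Phi_{np}$ are exactly the real solutions of $\Phi_{np}'(x)=0$, i.e. the zeros of the proxy object $\mathcal{D}_{np}$ the authors introduce (a polynomial numerator built from the two rational terms above, cleared of denominators so that it shares real roots with $\Phi_{np}'$ but is analytically cleaner). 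The strategy is to read $\mathcal{D}_{np}$ as a difference of two polynomial graphs: a term governed by $\Phi_n'(x^p)$, which for large $p$ is sharply concentrated and whose real behavior on $|x|<1$ is controlled by where $x^p$ lands relative to the $O(1)$ critical structure of $\Phi_n$, versus a slowly-varying term governed by $\Phi_n'(x)/\Phi_n(x)$. Each existing critical configuration of $\Phi_n$ should, after the substitution $x\mapsto x^p$, split or reflect into roughly twice as many crossings, and I would count these intersections region by region on $(-1,1)$ (the Gauss--Lucas bound confines everything there), tracking signs to establish that exactly $2^{k+1}-1 = 2\cdot 2^k - 1$ real roots appear. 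The doubling $2^k \mapsto 2^{k+1}$ is the arithmetic target that the geometric intersection count must reproduce.

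I expect the main obstacle to be twofold. First is making ``sufficiently separated'' quantitative and uniform: one must choose a threshold for $p$ (depending on $n$, or on $p_1,\dots,p_{k-1}$) large enough that the fast term $p\,x^{p-1}\Phi_n'(x^p)/\Phi_n(x^p)$ dominates and cleanly separates the graphs, and the induction requires that this threshold can be met simultaneously as each new prime is added — this is presumably what Remark \ref{rem:formal} (referenced but not yet visible) will pin down. Second, and harder, is transversality: counting intersection points gives the real roots, but the theorem also claims each is \emph{simple}, so I would need to show $\mathcal{D}_{np}$ and its derivative do not vanish together, i.e. that the two graphs cross transversally rather than touch. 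I anticipate this requires a derivative estimate showing the fast-varying graph has steep slope at each crossing (of order $p$ times something bounded below), swamping the bounded slope of the slow graph, so tangency is impossible for $p$ large. The parity constraints from the Parity proposition are a useful consistency check throughout: they force $N_{np}^\pm$ into the correct odd/even classes and rule out spurious double-counting, but they alone cannot pin the exact value $2^k-1$, which is why the quantitative domination-and-transversality argument is unavoidable.
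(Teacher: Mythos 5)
Your overall architecture matches the paper's exactly: induction on $k$ via a recurrence $N_{np}=2N_n+1$, the logarithmic derivative of $\Phi_{np}(x)=\Phi_n(x^p)/\Phi_n(x)$, a proxy whose nonzero real roots agree with those of $\Phi_{np}'$, and a count of graph intersections plus a transversality argument for simplicity. (One small correction: the paper's proxy is the rational function $\mathcal{D}_r(x)=x\Phi_r'(x)/\Phi_r(x)$, \emph{not} a cleared-denominator polynomial; keeping the denominator is what produces the exact self-similar relation $\mathcal{D}_{np}(x)=p\mathcal{D}_n(x^p)-\mathcal{D}_n(x)$, on which the whole graph analysis rests.) However, two steps that carry the real weight of the proof are missing. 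First, the base case: your tools do give $N_p^+=0$ (all coefficients of $\Phi_p'$ are positive) and $N_p^-$ odd and at least $1$, but parity plus endpoint signs cannot exclude $N_p^-=3,5,\dots$, and the paper explicitly notes that the obvious move — Descartes on $\Phi_p'(-x)$ — is useless because that polynomial has $p-2$ sign variations. The "direct sign/monotonicity argument" you defer to is precisely the nontrivial content, and the paper needs a trick for it: write $\Phi_p(x)=(x^p-1)/(x-1)$, so $\Phi_p'(x)=g(x)/(x-1)^2$ with $g(x)=(p-1)x^p-px^{p-1}+1$; then $g(-x)$ has exactly one sign variation, so Descartes gives exactly one simple negative root.

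Second, the "$+1$" in the recurrence. Your counting mechanism — each existing crossing "splits or reflects into roughly twice as many" — accounts for $2N_n$ intersections: $N_n$ near the roots of $\mathcal{D}_n$ (where $\mathcal{H}_{n,p}(x)=p\mathcal{D}_n(x^p)$ is flat and small) and $N_n$ near the roots of $\mathcal{H}_{n,p}$ (pushed out toward $\pm1$, where $\mathcal{H}_{n,p}$ is steep and huge). But the recurrence $N_{np}=2N_n$ with $N_{p_1}=1$ solves to $2^{k-1}$, not $2^k-1$; the entire discrepancy is one extra root per step, and you never identify where it comes from. In the paper it arises in the transition annulus $a<|x|<b$ between the flat regime and the steep regime of $\mathcal{H}_{n,p}$, where a convexity condition ($\mathcal{H}_{n,p}''>0$ on the positive side when $k$ is odd, on the negative side when $k$ is even — this is where the mysterious bound $|\mathcal{D}_n''|<\frac{1}{3l}$ in the "narrow" definition gets used) forces exactly one additional transversal crossing of the two graphs. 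Relatedly, your transversality mechanism ("the fast graph's steep slope swamps the slow graph's bounded slope") is only half the story: it works near the roots of $\mathcal{H}_{n,p}$, but near the roots of $\mathcal{D}_n$ the fast graph is nearly constant, and transversality there comes instead from the inductive hypothesis that the roots of $\Phi_n'$ are simple, so that $\mathcal{D}_n'$ is bounded away from zero on the strip $|\mathcal{D}_n|\leq h$ while $|\mathcal{H}_{n,p}'|$ is made smaller than that bound. Without these two ingredients the induction neither starts nor produces the claimed value $2^k-1$.
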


\begin{remark}
\label{rem:formal}We recall the formal definition of the notion
\textquotedblleft sufficiently separated\textquotedblright. Let $C$ stand for
a condition on $p_{1},\ldots,p_{k}$. Then
\[
\underset{\,\text{sufficiently separated}}{\underset{p_{1},\ldots,p_{k}\text{
}}{\forall}}\ C\left(  p_{1},\ldots,p_{k}\right)
\ \ \ \ \ :\Longleftrightarrow\ \ \ \ \underset{p_{1}}{\forall}%
\ \ \underset{q_{2}>p_{1}}{\exists}\ \ \underset{p_{2}\geq q_{2}}{\forall
}\ \ \underset{q_{3}>p_{2}}{\exists}\ \ \underset{p_{3}\geq q_{3}}{\forall
}\cdots\underset{q_{k}>p_{k-1}}{\exists}\ \ \underset{p_{k}\geq q_{k}%
}{\forall}\ \ C\left(  p_{1},\ldots,p_{k}\right)
\]

\end{remark}

\begin{remark}
One can easily read more detailed results off the proof of the main theorem
given in Section~\ref{sec:proof}. Let $N_{n}^{-}\ $and $N_{n}^{+}\ $stand for
the number of negative and positive real roots of $\Phi_{n}^{\prime}$
respectively. Suppose that $p$ is a prime sufficiently larger than $n$. Then
we have

\begin{align*}
N_{np}  &  =2N_{n}+1\\
N_{np}^{+}  &  =2N_{n}^{+}+\left\{
\begin{array}
[c]{ll}%
1 & \text{if }k\ \text{is odd}\\
0 & \text{if }k\ \text{is even}%
\end{array}
\right. \\
N_{np}^{-}  &  =2N_{n}^{-}+\left\{
\begin{array}
[c]{ll}%
0 & \text{if }k\ \text{is odd}\\
1 & \text{if }k\ \text{is even}%
\end{array}
\right.
\end{align*}


\end{remark}

\section{Proof}

\label{sec:proof}

In this and next section, we will prove the main result (Theorem
\ref{thm:separated primes}). In this section, we focus on the high level
conceptual structure of the proof. In the next section, we will provide a
rigorous (thus highly technical ) proof of a certain key result used in this section.

Let $n$ be the product of $k$ distinct odd primes which are \textquotedblleft
sufficiently\textquotedblright\ separated from each other. We need to prove
that $\Phi_{n}^{\prime}$ has exactly $2^{k}-1$ many real roots and each of
them is simple. The whole proof is long. Hence before plunging into the
details, we give here a bird-eye view of the proof. Let $N_{n}$ stand for the
number of distinct real roots of $\Phi_{n}^{\prime}\left(  x\right)  $. The
proof will essential set up a recurrence formula for $N_{n}$ and solve it.

\begin{enumerate}
\item Initial condition: Let $p$ be an odd prime. We will prove that $\Phi
_{p}^{\prime}\left(  x\right)  $ has only one real root, that is, $N_{p}=1$
and that the real root is simple. (Proposition~\ref{pro:ic} in Subsection
\ref{sec:ic})

\item Recurrence:\ Let $n$ is a product of distinct odd prime numbers. Assume
that $\Phi_{n}^{\prime}\left(  x\right)  $ has $N_{n}$ real roots and that
each of them is simple. Let $p$ be prime not dividing $n$ and sufficiently
large. We will prove that $\Phi_{np}^{\prime}\left(  x\right)  $ has
$2N_{n}+1$ real roots, that is, $N_{np}=2N_{n}+1$ and that each real root is
simple. (Theorem~\ref{thm:rec} in Subsection \ref{sec:rec}).

\item Solve: Let $n$ be the product of $k$ distinct odd primes which are
\textquotedblleft sufficiently\textquotedblright\ separated from each other.
Note that $2$ gives a recurrence equation and $1$ gives an initial condition.
By solving the recurrence equation with the initial condition, we immediately
conclude that $\Phi_{n}^{\prime}$ has exactly $2^{k}-1$ many real roots and
each of them is simple (Subsection \ref{sec:proof_main})
\end{enumerate}

\noindent Now let us plunge into the details.

\subsection{Initial condition\label{sec:ic}}

\begin{proposition}
[Initial condition]\label{pro:ic}Let $p$ be an odd prime. Then $\Phi
_{p}^{\prime}\left(  x\right)  $ has only one real root, that is, $N_{p}=1$
and that the real root is simple.
\end{proposition}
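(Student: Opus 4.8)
The plan is to reduce everything to an elementary monotonicity analysis of a single auxiliary polynomial. First I would record the shape of $\Phi_{p}^{\prime}$ directly from $\Phi_{p}(x)=1+x+\cdots+x^{p-1}$, namely
\[
\Phi_{p}^{\prime}(x)=1+2x+3x^{2}+\cdots+(p-1)x^{p-2}.
\]
Since every coefficient is strictly positive, $\Phi_{p}^{\prime}(x)\geq 1>0$ for all $x\geq 0$; hence $\Phi_{p}^{\prime}$ has no root in $[0,\infty)$ and it remains only to count its negative roots. (This is consistent with $\Phi_{p}^{\prime}(0)=1$ in Proposition~\ref{prp:special-values}.)

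Next I would pass to the closed form coming from $\Phi_{p}(x)=(x^{p}-1)/(x-1)$ (Proposition~\ref{prp:well known}:6 with trivial factor). Differentiating and clearing the denominator gives
\[
(x-1)^{2}\,\Phi_{p}^{\prime}(x)=g(x):=(p-1)x^{p}-p\,x^{p-1}+1 .
\]
For $x<0$ the factor $(x-1)^{2}$ is strictly positive, so on $(-\infty,0)$ the sign of $\Phi_{p}^{\prime}$ agrees with the sign of $g$, and the negative roots of $\Phi_{p}^{\prime}$ are exactly the negative roots of $g$. Now $g^{\prime}(x)=p(p-1)\,x^{p-2}(x-1)$, and since $p$ is odd the exponent $p-2$ is odd; therefore for $x<0$ both $x^{p-2}$ and $x-1$ are negative, so $g^{\prime}(x)>0$. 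Thus $g$ is strictly increasing on $(-\infty,0)$, with $g(0)=1>0$ and $g(x)\to-\infty$ as $x\to-\infty$ (the leading term $(p-1)x^{p}$ dominates and $p$ is odd). Consequently $g$ crosses zero exactly once on $(-\infty,0)$, giving a unique negative real root $r$ of $\Phi_{p}^{\prime}$, and $N_{p}=1$.

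Finally I would read off simplicity from the same picture: since $g$ is strictly increasing across $r$ we have $g^{\prime}(r)\neq 0$, so $r$ is a simple root of $g$; as $r\neq 1$, the nonvanishing factor $(x-1)^{2}$ does not change the order of vanishing, so $r$ is a simple root of $\Phi_{p}^{\prime}$ as well. I do not expect a genuine obstacle here, since the argument is elementary; the only point requiring care is the bookkeeping around the spurious double factor $(x-1)^{2}$ introduced by writing $\Phi_{p}$ as a quotient. One must confirm that $x=1$ is \emph{exactly} a double root of $g$ (indeed $g(1)=g^{\prime}(1)=0$ while $g^{\prime\prime}(1)=p(p-1)\neq 0$), so that no real root of $\Phi_{p}^{\prime}$ is lost or spuriously created at $x=1$, and that multiplicities transfer correctly away from $x=1$.
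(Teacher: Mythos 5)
Your proposal is correct, and it shares the paper's central device: the same reduction $(x-1)^{2}\,\Phi_{p}^{\prime}(x)=g(x)=(p-1)x^{p}-px^{p-1}+1$ obtained by differentiating $\Phi_{p}(x)=(x^{p}-1)/(x-1)$, the same observation that positivity of the coefficients of $\Phi_{p}^{\prime}$ rules out roots in $[0,\infty)$, and the same remark that $(x-1)^{2}>0$ on $(-\infty,0)$ transfers roots and multiplicities between $g$ and $\Phi_{p}^{\prime}$ there. Where you diverge is the final counting step. The paper applies Descartes' rule of signs to $h(x)=g(-x)=-(p-1)x^{p}-px^{p-1}+1$, which has exactly one sign variation, hence exactly one positive root, necessarily simple. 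You instead factor the derivative, $g^{\prime}(x)=p(p-1)x^{p-2}(x-1)$, note that both factors are negative for $x<0$ (using that $p-2$ is odd), conclude $g$ is strictly increasing on $(-\infty,0)$, and finish with the intermediate value theorem between $g(-\infty)=-\infty$ and $g(0)=1$; simplicity falls out of $g^{\prime}(r)>0$. Both arguments are sound and elementary. Descartes' rule is quicker and requires no calculus beyond the initial differentiation, while your monotonicity argument is more self-contained (no appeal to an external counting theorem) and yields a little extra information for free, namely that $\Phi_{p}^{\prime}$ changes sign from negative to positive at its unique real root, so it is a transversal crossing and locates the minimum of $\Phi_{p}$ on the real line. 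Your closing check that $x=1$ is exactly a double root of $g$ is a sensible consistency verification, though it is not logically needed, since the whole analysis takes place on $(-\infty,0)$ where the factor $(x-1)^{2}$ never vanishes.
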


\begin{proof}
Note the well known fact:
\begin{align*}
\Phi_{p}\left(  x\right)   &  =x^{p-1}+x^{p-2}+\cdots+x+1\\
\Phi_{p}^{\prime}\left(  x\right)   &  =\left(  p-1\right)  x^{p-2}+\left(
p-2\right)  x^{p-3}+\cdots+1
\end{align*}
Hence there is no non-negative real root of $\Phi_{p}^{\prime}\left(
x\right)  $. Thus it suffices to count the number of negative real root of
$\Phi_{p}^{\prime}\left(  x\right)  $. A natural idea is to use Descartes'
rule of sign. An obvious approach would be to apply Descartes' rule of sign on
$\Phi_{p}^{\prime}\left(  -x\right)  $. However, the approach fails since the
sign variation count of $\Phi_{p}^{\prime}\left(  -x\right)  $ is $p-2$.
Fortunately we found another way to use Descartes' rule of sign. Recall%
\[
\Phi_{p}\left(  x\right)  =\frac{x^{p}-1}{x-1}%
\]
By differentiating, we have%
\[
\Phi_{p}^{\prime}\left(  x\right)  =\frac{\left(  px^{p-1}\right)  \left(
x-1\right)  -\left(  x^{p}-1\right)  }{\left(  x-1\right)  ^{2}}%
=\frac{g\left(  x\right)  }{\left(  x-1\right)  ^{2}}%
\]
where%
\[
g\left(  x\right)  =\left(  p-1\right)  x^{p}-px^{p-1}+1
\]
Note that the denominator $\left(  x-1\right)  ^{2}$ is positive for every
negative value of $x.$ Hence it suffices to count the negative roots of
$g\left(  x\right)  $, equivalently to count the positive roots of%
\[
h\left(  x\right)  =g\left(  -x\right)  =-\left(  p-1\right)  x^{p}%
-px^{p-1}+1
\]
Note that there is exactly one sign variation. Hence, by Descartes' rule of
sign, we see that $h\left(  x\right)  $ has exactly one simple positive root.
Hence $g\left(  x\right)  $, in turn $\Phi_{p}^{\prime}\left(  x\right)  $ has
exactly one negative root and it is simple. Recalling that there is no
non-negative real root of $\Phi_{p}^{\prime}\left(  x\right)  $, we conclude
that $\Phi_{p}^{\prime}\left(  x\right)  $ has only one real root and it is simple.
\end{proof}

\subsection{Recurrence\label{sec:rec}}

The main goal of this subsection is to prove Theorem \ref{thm:rec}. For the
readers' convenience, we \textquotedblleft pre\textquotedblright-produce the
claim of the theorem here. Let $n$ be a product of distinct odd prime numbers.
Assume that $\Phi_{n}^{\prime}\left(  x\right)  $ has $N_{n}$ real roots and
that each of them is simple. Let $p$ be prime not dividing $n$ and
sufficiently large. Then $\Phi_{np}^{\prime}\left(  x\right)  $ has $2N_{n}+1$
real roots, that is, $N_{np}=2N_{n}+1$ and that each real root is simple.

\begin{remark}
Before we describe our proof technique, we would like address a question that
an attentive reader might have. Can Theorem \ref{thm:rec} be proved easily by
using Rouch\'{e}'s Theorem? A short-answer is that we tried and did not
succeed. Let us elaborate. One can readily obtain the relationship between
$\Phi_{np}^{\prime}\ $and $\Phi_{n}^{\prime}$ by taking the logarithmic
derivative on the fundamental relation $\Phi_{np}\left(  x\right)  =\frac
{\Phi_{n}\left(  x^{p}\right)  }{\Phi_{n}\left(  x\right)  }:$%
\[
\frac{\Phi_{np}^{\prime}\left(  x\right)  }{\Phi_{np}\left(  x\right)
}=px^{p-1}\frac{\Phi_{n}^{\prime}\left(  x^{p}\right)  }{\Phi_{n}\left(
x^{p}\right)  }-\frac{\Phi_{n}^{\prime}\left(  x\right)  }{\Phi_{n}\left(
x\right)  }%
\]
Let $x\ $be a real root of $\Phi_{n}^{\prime}\left(  x\right)  $. Then by
Gauss-Lucas, we immediately see that $\left\vert x\right\vert <1$. Hence
$p\rightarrow\infty$, we have $px^{p-1}\rightarrow0$, in turn $\frac{\Phi
_{np}^{\prime}\left(  x\right)  }{\Phi_{np}\left(  x\right)  }\rightarrow
-\frac{\Phi_{n}^{\prime}\left(  x\right)  }{\Phi_{n}\left(  x\right)  }$ .
Thus from Rouch\'{e}'s Theorem, we see that some roots of $\Phi_{np}^{\prime
}\left(  x\right)  $ approach the roots of $\Phi_{n}^{\prime}\left(  x\right)
$. Hence we have $N_{np}\geq N_{n}$ for a sufficiently large prime $p$.
Furthermore, applying a straightforward technique to $\frac{\Phi_{n}%
^{\prime\prime}\left(  x\right)  }{\Phi_{n}\left(  x\right)  }$, one can also
easily show that if $\Phi_{n}^{\prime}(x)$ has simple real roots then
$\Phi_{np}^{\prime}(x)$ has only simple real roots for a sufficiently large
prime~$p$. However, we were not able to show $N_{np}=2N_{n}+1$ using
Rouch\'{e}'s Theorem and any related theories. Hence we developed a new proof technique.
\end{remark}

Through trial and error, we observed that a simple and \emph{useful} relation
can be found if we consider the following \textquotedblleft
proxy\textquotedblright\ of $\Phi_{r}^{\prime}\left(  x\right)  $.

\begin{definition}
[Proxy]\label{def:proxy}The \emph{proxy} $\mathcal{D}_{r}\left(  x\right)  $
of $\Phi_{r}^{\prime}\left(  x\right)  $ is defined by%
\[
\mathcal{D}_{r}\left(  x\right)  =x\frac{\Phi_{r}^{\prime}\left(  x\right)
}{\Phi_{r}\left(  x\right)  }%
\]

\end{definition}

\noindent It is a \emph{proxy}\ in the sense that it can correctly
\textquotedblleft represent\textquotedblright\ $\Phi_{r}^{\prime}\left(
x\right)  $ with respect to the real roots and their simplicity, as shown in
the following lemma.

\begin{lemma}
\label{lem:equiv}Let $r\ $be the product of at least one distinct odd primes.
We have

\begin{enumerate}
\item Let $x$ be a non-zero real number. Then $x$ is a real root of $\Phi
_{r}^{\prime}\left(  x\right)  $ if and only if it is a real root of
$\mathcal{D}_{r}\left(  x\right)  $.

\item Let $x$ be a non-zero real number. Then $x$ is a simple real root of
$\Phi_{r}^{\prime}\left(  x\right)  $ if and only if it is a simple real root
of $\mathcal{D}_{r}\left(  x\right)  $.
\end{enumerate}
\end{lemma}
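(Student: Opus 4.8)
The plan is to prove Lemma~\ref{lem:equiv} by analyzing the relationship $\mathcal{D}_r(x) = x\,\Phi_r'(x)/\Phi_r(x)$ near a nonzero real point $x_0$, exploiting the fact that $\Phi_r$ has no real roots. First I would establish the crucial background fact that $\Phi_r(x)$ is strictly positive (in fact $\geq 1$) for all real $x$ when $r$ is a product of distinct odd primes. This follows because the roots of $\Phi_r$ are primitive $r$-th roots of unity, none of which is real (as $r\geq 3$), so $\Phi_r$ has no real zeros; combined with $\Phi_r(0)=1$ and continuity, $\Phi_r$ is everywhere positive on $\mathbb{R}$. In particular, for any nonzero real $x_0$, both $x_0$ and $\Phi_r(x_0)$ are nonzero, so $\mathcal{D}_r$ is a well-defined analytic (indeed rational) function with no pole at $x_0$.

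With positivity in hand, part~(1) is almost immediate. The plan is to observe that at a nonzero real point $x_0$, the factor $x_0/\Phi_r(x_0)$ is a nonzero real number. Since $\mathcal{D}_r(x_0) = \bigl(x_0/\Phi_r(x_0)\bigr)\,\Phi_r'(x_0)$ and the prefactor never vanishes, we get $\mathcal{D}_r(x_0)=0$ if and only if $\Phi_r'(x_0)=0$. This gives the equivalence of real roots claimed in part~(1).

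For part~(2), the plan is to compare the order of vanishing of $\mathcal{D}_r$ and $\Phi_r'$ at a common nonzero real root $x_0$. Writing $u(x) = x/\Phi_r(x)$, which is analytic and nonzero in a neighborhood of $x_0$, we have $\mathcal{D}_r(x) = u(x)\,\Phi_r'(x)$. Multiplying an analytic function by a unit (an analytic function that is nonzero at the point) does not change its order of vanishing at $x_0$; hence $\mathcal{D}_r$ vanishes to exactly the same order at $x_0$ as $\Phi_r'$ does. In particular, $x_0$ is a simple zero of $\Phi_r'$ (order exactly~$1$) if and only if it is a simple zero of $\mathcal{D}_r$. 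Concretely, one can verify this by differentiating: $\mathcal{D}_r'(x_0) = u'(x_0)\,\Phi_r'(x_0) + u(x_0)\,\Phi_r''(x_0)$, and at a root where $\Phi_r'(x_0)=0$ this reduces to $\mathcal{D}_r'(x_0) = u(x_0)\,\Phi_r''(x_0)$; since $u(x_0)\neq 0$, we get $\mathcal{D}_r'(x_0)\neq 0 \iff \Phi_r''(x_0)\neq 0$, which is precisely the statement that simplicity transfers both ways.

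I do not anticipate a genuine obstacle here; the lemma is essentially a bookkeeping statement once the positivity of $\Phi_r$ on the real line is noted. The only point requiring a little care is making sure that $\Phi_r(x_0)\neq 0$ so that $\mathcal{D}_r$ is defined and has no spurious pole at $x_0$ — this is exactly what the real-positivity of $\Phi_r$ guarantees, and it is also why the hypothesis \emph{nonzero} real $x$ is harmless (the factor of $x$ only introduces a possible issue at the origin, which is explicitly excluded). The restriction that $r$ be a product of at least one distinct odd prime ensures $r\geq 3$ and hence that $\Phi_r$ genuinely has no real roots, which is the foundation of the whole argument.
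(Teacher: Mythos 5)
Your proposal is correct and takes essentially the same route as the paper: part (1) follows from the non-vanishing of $\Phi_r$ on $\mathbb{R}$, and part (2) from differentiating $\mathcal{D}_r(x)=u(x)\,\Phi_r'(x)$ with $u(x)=x/\Phi_r(x)$ a unit at $x_0$, which is exactly the same computation as the paper's quotient-rule expansion of $\mathcal{D}_r'$ (the paper merely states the formula and calls the conclusion immediate). One harmless slip: your parenthetical claim that $\Phi_r(x)\geq 1$ for all real $x$ is false (e.g.\ $\Phi_3(-1/2)=3/4$), but your argument only uses strict positivity, which does hold.
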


\begin{proof}
We prove the claims one by one.\noindent

\begin{enumerate}
\item Obvious since $\Phi_{r}\left(  x\right)  \ $has no real roots.

\item Immediate from%
\[
\mathcal{D}_{r}^{\prime}\left(  x\right)  =\left(  x\frac{\Phi_{r}^{\prime
}\left(  x\right)  }{\Phi_{r}\left(  x\right)  }\right)  ^{\prime}%
=\frac{\left(  \Phi_{r}^{\prime}\left(  x\right)  +x\Phi_{r}^{\prime\prime
}\left(  x\right)  \right)  \Phi_{r}\left(  x\right)  -x\Phi_{r}^{\prime
}\left(  x\right)  \Phi_{r}^{\prime}\left(  x\right)  }{\Phi_{r}\left(
x\right)  ^{2}}%
\]

\end{enumerate}
\end{proof}

\noindent The following Lemma shows that the proxy $\mathcal{D}_{r}\left(
x\right)  $ could be a \emph{useful} one, due to the simple relation
between$\ \ \mathcal{D}_{np}\left(  x\right)  $ and $\mathcal{D}_{n}\left(
x\right)  $.

\begin{lemma}
[Usefulness of Proxy]\label{lem:DH}Let $n$ be a positive integer and let $p$
be a prime not dividing $n$. We have%
\[
\mathcal{D}_{np}\left(  x\right)  =p\mathcal{D}_{n}\left(  x^{p}\right)
-\mathcal{D}_{n}\left(  x\right)  ,
\]

\end{lemma}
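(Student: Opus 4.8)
The plan is to derive the identity by direct computation from the definition of the proxy, starting from the fundamental relation $\Phi_{np}(x)=\Phi_{n}(x^{p})/\Phi_{n}(x)$ stated in Proposition~\ref{prp:well known}:6. First I would take the logarithmic derivative of this relation, which the excerpt has already recorded in the Rouch\'{e} remark:
\[
\frac{\Phi_{np}^{\prime}(x)}{\Phi_{np}(x)}=px^{p-1}\frac{\Phi_{n}^{\prime}(x^{p})}{\Phi_{n}(x^{p})}-\frac{\Phi_{n}^{\prime}(x)}{\Phi_{n}(x)}.
\]
The proxy is defined by $\mathcal{D}_{r}(x)=x\,\Phi_{r}^{\prime}(x)/\Phi_{r}(x)$, so to convert the logarithmic-derivative identity into a statement about proxies I would simply multiply both sides by $x$.

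Multiplying through by $x$ gives the left-hand side $\mathcal{D}_{np}(x)$ directly. On the right-hand side, the first term becomes $x\cdot px^{p-1}\,\Phi_{n}^{\prime}(x^{p})/\Phi_{n}(x^{p})=p\cdot x^{p}\,\Phi_{n}^{\prime}(x^{p})/\Phi_{n}(x^{p})$, and here the key observation is that $x^{p}\,\Phi_{n}^{\prime}(x^{p})/\Phi_{n}(x^{p})$ is exactly $\mathcal{D}_{n}$ evaluated at the point $x^{p}$, i.e. $\mathcal{D}_{n}(x^{p})$. The second term $x\,\Phi_{n}^{\prime}(x)/\Phi_{n}(x)$ is by definition $\mathcal{D}_{n}(x)$. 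Thus the right-hand side is $p\,\mathcal{D}_{n}(x^{p})-\mathcal{D}_{n}(x)$, which is precisely the claimed formula.

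The only genuine subtlety, and the step I would be most careful about, is bookkeeping the substitution $x\mapsto x^{p}$ inside the proxy: one must recognize that the factor $px^{p-1}$ supplied by the chain rule combines with the extra $x$ from the proxy definition to produce the factor $x^{p}$ needed to reconstruct $\mathcal{D}_{n}(x^{p})$ with the correct leading $p$. There is no analytic difficulty here whatsoever; the identity is a formal algebraic consequence of the defining relation, valid as an equality of rational functions wherever the denominators $\Phi_{np}(x)$, $\Phi_{n}(x^{p})$, and $\Phi_{n}(x)$ are nonzero. Since these cyclotomic polynomials have no real roots, the identity in particular holds for all real $x$, which is all that is needed for the subsequent root-counting arguments.
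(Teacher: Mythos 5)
Your proposal is correct and follows essentially the same route as the paper's own proof: take the logarithmic derivative of $\Phi_{np}(x)=\Phi_{n}(x^{p})/\Phi_{n}(x)$, multiply by $x$, and observe that $p\,x^{p}\,\Phi_{n}^{\prime}(x^{p})/\Phi_{n}(x^{p})=p\,\mathcal{D}_{n}(x^{p})$. Your added remark that the identity holds for all real $x$ because cyclotomic polynomials have no real roots is a small but accurate refinement of the same argument.
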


\begin{remark}
It is convenient to let \ \ $\displaystyle
\mathcal{H}_{n,p}\left(  x\right)  = p\mathcal{D}_{n}\left(  x^{p}\right)  $.
\end{remark}

\begin{proof}
By taking logarithmic derivative of the fundamental relation$\Phi_{np}\left(
x\right)  =\frac{\Phi_{n}\left(  x^{p}\right)  }{\Phi_{n}\left(  x\right)
},\ $ we have
\[
\frac{\Phi_{np}^{\prime}\left(  x\right)  }{\Phi_{np}\left(  x\right)  }%
=\frac{\Phi_{n}\left(  x^{p}\right)  ^{\prime}}{\Phi_{n}\left(  x^{p}\right)
}-\frac{\Phi_{n}^{\prime}\left(  x\right)  }{\Phi_{n}\left(  x\right)
}=px^{p-1}\frac{\Phi_{n}^{\prime}\left(  x^{p}\right)  }{\Phi_{n}\left(
x^{p}\right)  }-\frac{\Phi_{n}^{\prime}\left(  x\right)  }{\Phi_{n}\left(
x\right)  }%
\]
By multiplying both sides by $x$, we have%
\[
x\frac{\Phi_{np}^{\prime}\left(  x\right)  }{\Phi_{np}\left(  x\right)
}=px^{p}\frac{\Phi_{n}^{\prime}\left(  x^{p}\right)  }{\Phi_{n}\left(
x^{p}\right)  }-x\frac{\Phi_{n}^{\prime}\left(  x\right)  }{\Phi_{n}\left(
x\right)  }%
\]
Thus%
\[
\mathcal{D}_{np}\left(  x\right)  =p\mathcal{D}_{n}\left(  x^{p}\right)
-\mathcal{D}_{n}\left(  x\right)  =\mathcal{H}_{n,p}\left(  x\right)
-\mathcal{D}_{n}\left(  x\right)
\]

\end{proof}

\noindent The above two lemmas suggest the following strategy for proving
Theorem \ref{thm:rec}.

\begin{itemize}
\item Count the real roots of $\Phi_{np}^{\prime}\mathcal{\ }$by counting the
intersections between the graphs of $\mathcal{D}_{n}$ and $\mathcal{H}_{n,p}$.

\item Show the real roots are simple by showing that the intersections are transversal.
\end{itemize}

\noindent In order to carry out the above strategy, we of course need to have
some information on the shapes of the graphs of $\mathcal{D}_{n}$ and
$\mathcal{H}_{n,p}$ and their relationship. We gather such information in the
following two lemmas: Lemma \ref{lem:shapes} for the shapes of the graphs of
$\mathcal{D}_{n}$ and $\mathcal{H}_{n,p}$ and Lemma \ref{lem:relation} for
their relationship.

\begin{lemma}
[Shapes of graphs of $\mathcal{D}_{n}$ and $\mathcal{H}_{n,p}$ ]%
\label{lem:shapes}Let $n=p_{1}\cdots p_{k}$ where $p_{1},\ldots,p_{k}$ are
distinct odd primes. Then the signs of $\mathcal{D}_{n}^{\left(  d\right)
}\left(  x\right)  $ and $\mathcal{H}_{n,p}^{\left(  d\right)  }\left(
x\right)  $, the $d$-th derivative of $\mathcal{D}_{n}\left(  x\right)
\ \ $and $\mathcal{H}_{n,p}\left(  x\right)  $, at $x=-1,0,+1$ are as follows.%
\[%
\begin{array}
[c]{c||c|c|c}%
\mathcal{D}_{n}^{\left(  d\right)  }\left(  x\right)  & x=-1 & x=0 &
x=+1\\\hline\hline
d=0 & + & 0 & +\\\hline
d=1 & - & (-1)^{k+1} & +\\\hline
d=2 & - & \left(  -1\right)  ^{k+1} & -
\end{array}
\ \ \ \ \ \ \ \
\begin{array}
[c]{c||c|c|c}%
\mathcal{H}_{n,p}^{\left(  d\right)  }\left(  x\right)  & x=-1 & x=0 &
x=+1\\\hline\hline
d=0 & + & 0 & +\\\hline
d=1 & - & 0 & +\\\hline
d=2 & - & 0 & -
\end{array}
\]

\end{lemma}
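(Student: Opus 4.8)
The plan is to derive the entire $\mathcal{H}_{n,p}$ table from the $\mathcal{D}_n$ table. Indeed $\mathcal{H}_{n,p}(x)=p\mathcal{D}_n(x^p)$, so by the chain rule $\mathcal{H}_{n,p}'(x)=p^2x^{p-1}\mathcal{D}_n'(x^p)$ and $\mathcal{H}_{n,p}''(x)=p^2(p-1)x^{p-2}\mathcal{D}_n'(x^p)+p^3x^{2p-2}\mathcal{D}_n''(x^p)$; since $p$ is an odd prime $\geq3$ we have $(-1)^p=-1$, and at $x=0$ every positive power of $x$ vanishes while $\mathcal{D}_n(0)=0$. Thus the three $x=0$ entries of $\mathcal{H}_{n,p}$ are all $0$, and the remaining entries reduce to the values of $\mathcal{D}_n,\mathcal{D}_n',\mathcal{D}_n''$ at $x=\pm1$. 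So the core task is the $\mathcal{D}_n$ table, which I would treat at $x=\pm1$ and at $x=0$ by separate tools.

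For $x=\pm1$ I would start from the partial-fraction form $\mathcal{D}_n(x)=\varphi(n)+\sum_{\rho}\frac{\rho}{x-\rho}$, the sum running over the primitive $n$-th roots of unity $\rho=e^{i\theta}$. Grouping each $\rho$ with its conjugate $\bar\rho$ makes every contribution real, and substituting $x=1$ resp. $x=-1$ and simplifying $1-\rho=-2i\sin(\theta/2)e^{i\theta/2}$ resp. $1+\rho=2\cos(\theta/2)e^{i\theta/2}$ turns each paired term into an explicit expression with $\sin^2(\theta/2)$ or $\cos^2(\theta/2)$ in the denominator (here I use that $\rho\neq\pm1$ since $n$ is odd). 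This yields at once $\mathcal{D}_n(\pm1)=\varphi(n)/2>0$, $\mathcal{D}_n'(1)>0$ and $\mathcal{D}_n'(-1)<0$, i.e. the $d=0,1$ columns at $\pm1$. For the $d=2$ row I would instead exploit palindromicity: from $\Phi_n(x)=x^{\varphi(n)}\Phi_n(1/x)$ one obtains the functional equation $\mathcal{D}_n(x)+\mathcal{D}_n(1/x)=\varphi(n)$, and differentiating twice and evaluating at the fixed points $x=\pm1$ gives the clean identities $\mathcal{D}_n''(1)=-\mathcal{D}_n'(1)$ and $\mathcal{D}_n''(-1)=\mathcal{D}_n'(-1)$. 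Combined with the signs just found, these force $\mathcal{D}_n''(\pm1)<0$.

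For $x=0$ I would Taylor-expand. Writing $\Phi_n(x)=1+a_1x+a_2x^2+\cdots$ one computes $\mathcal{D}_n(x)=a_1x+(2a_2-a_1^2)x^2+\cdots$, so $\mathcal{D}_n(0)=0$, $\mathcal{D}_n'(0)=a_1$ and $\mathcal{D}_n''(0)=2(2a_2-a_1^2)$. It therefore remains to pin down the two lowest coefficients $a_1,a_2$. These follow from the Möbius product $\Phi_n(x)=\prod_{e\mid n}(x^e-1)^{\mu(n/e)}$ read modulo $x^{p_1}$: every factor with $e\geq p_1$ contributes only a constant, and these constants multiply to $-1$, leaving $\Phi_n(x)\equiv-(x-1)^{\mu(n)}\pmod{x^{p_1}}$; hence $\Phi_n\equiv1-x$ when $k$ is even and $\Phi_n\equiv1+x+x^2+\cdots$ when $k$ is odd (recall $p_1\geq3$). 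Thus $a_1=(-1)^{k+1}$ and $a_2\in\{0,1\}$, giving $\mathcal{D}_n'(0)=(-1)^{k+1}$ and $\mathcal{D}_n''(0)=2(2a_2-1)$ of sign $(-1)^{k+1}$, which completes the $\mathcal{D}_n$ table.

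The step I expect to be delicate is the $d=2$ entries of $\mathcal{H}_{n,p}$ at $x=\pm1$. There the chain-rule formula gives a sum of two terms of opposite sign, $p^2(p-1)(\pm1)^{\,p-2}\mathcal{D}_n'(\pm1)+p^3\mathcal{D}_n''(\pm1)$, so the sign is not evident term by term. What rescues the computation is precisely the cancellation produced by the palindromic identities above: substituting $\mathcal{D}_n''(1)=-\mathcal{D}_n'(1)$ collapses the $x=1$ expression to $-p^2\mathcal{D}_n'(1)<0$, and substituting $\mathcal{D}_n''(-1)=\mathcal{D}_n'(-1)$ collapses the $x=-1$ expression to $p^2\mathcal{D}_n'(-1)<0$. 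Establishing this exact proportionality between the first and second derivatives of $\mathcal{D}_n$ at $\pm1$ is the crux on which the $d=2$ row rests; everything else is essentially bookkeeping.
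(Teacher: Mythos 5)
Your proposal is correct, and it takes a genuinely different route from the paper's. The paper disposes of this lemma in two sentences: it asserts that the tables follow from the special values $\Phi_p^{(d)}(\pm1),\Phi_p^{(d)}(0)$ of Proposition~\ref{prp:special-values} by ``tedious book-keeping'' --- implicitly, quotient-rule computations of $\mathcal{D}_p^{(d)}$ at the three points (this is why that table carries $d=3$: $\mathcal{D}_p''$ involves $\Phi_p'''$), combined with an induction on $k$ through the recursion $\mathcal{D}_{np}(x)=p\,\mathcal{D}_n(x^p)-\mathcal{D}_n(x)$ of Lemma~\ref{lem:DH}. Your argument never touches that table and is non-inductive and uniform in $n$: the partial-fraction expansion $\mathcal{D}_n(x)=\varphi(n)+\sum_\rho \rho/(x-\rho)$ handles $d=0,1$ at $x=\pm1$ (each conjugate pair contributes $-1$ at $x=\pm1$, giving $\mathcal{D}_n(\pm1)=\varphi(n)/2$); the palindromic functional equation $\mathcal{D}_n(x)+\mathcal{D}_n(1/x)=\varphi(n)$, differentiated twice at the fixed points, gives $\mathcal{D}_n''(1)=-\mathcal{D}_n'(1)$ and $\mathcal{D}_n''(-1)=\mathcal{D}_n'(-1)$; and the M\"obius product modulo $x^{p_1}$ gives the $x=0$ column (your constant-term computation $(-1)^{-\mu(n)}=-1$ is right, and note that Proposition~\ref{prp:well known}, items 1--2, would only supply $a_1$; your mod-$x^{p_1}$ argument is what pins down $a_2$). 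What your approach buys is exact identities rather than mere signs, and these are needed at exactly one spot where sign-bookkeeping cannot succeed: $\mathcal{H}_{n,p}''(\pm1)=p^2(p-1)(\pm1)^{p-2}\mathcal{D}_n'(\pm1)+p^3\mathcal{D}_n''(\pm1)$ mixes two terms of opposite sign, so the $d=2$ row of the $\mathcal{H}_{n,p}$ table is \emph{not} a formal consequence of the $\mathcal{D}_n$ sign table; your proportionality identities collapse it to $-p^2\mathcal{D}_n'(1)<0$ at $x=1$ and $p^2\mathcal{D}_n'(-1)<0$ at $x=-1$. You correctly flag this as the crux; the paper's sketch glosses over it, and any completion of the paper's route would likewise have to track actual values (or derive this same identity) rather than signs alone.
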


\begin{proof}
It is straightforward to show the claims from
Proposition~\ref{prp:special-values}. We will not show the detail since it
consists of tedious book-keeping.
\end{proof}

\begin{lemma}
[Relationship between graphs of $\mathcal{D}_{n}$ and $\mathcal{H}_{n,p}$%
]\label{lem:relation}There is a one-to-one correspondence between the graphs
of $\mathcal{D}_{n}$ and $\mathcal{H}_{n,p}$, given by%
\[
\left(  x,y\right)  \longleftrightarrow\left(  x^{1/p},py\right)
\]

\end{lemma}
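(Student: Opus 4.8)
The plan is to prove this directly by exhibiting the stated correspondence as a genuine bijection of the plane and then checking that it carries one graph onto the other. The entire content is the change of variables already encoded in the identity $\mathcal{H}_{n,p}(x) = p\,\mathcal{D}_n(x^p)$ from the remark following Lemma~\ref{lem:DH}, so no analysis beyond an elementary substitution should be required.

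First I would record that, because $p$ is an odd prime (as it is throughout the inductive setup of Theorem~\ref{thm:rec}), the map $x \mapsto x^p$ is a strictly increasing bijection of $\mathbb{R}$ onto itself; hence the real $p$-th root $x \mapsto x^{1/p}$ is well-defined on all of $\mathbb{R}$, and the planar map
\[
\phi\colon (x,y) \longmapsto (x^{1/p},\, py)
\]
is a bijection of $\mathbb{R}^{2}$ with inverse $\phi^{-1}\colon (u,v) \longmapsto (u^{p},\, v/p)$.

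Next I would verify that $\phi$ sends the graph of $\mathcal{D}_{n}$ onto the graph of $\mathcal{H}_{n,p}$. Starting from a point $(x,y)$ with $y = \mathcal{D}_{n}(x)$ and writing $u = x^{1/p}$, $v = py$, so that $u^{p} = x$, the defining relation gives
\[
\mathcal{H}_{n,p}(u) = p\,\mathcal{D}_{n}(u^{p}) = p\,\mathcal{D}_{n}(x) = py = v,
\]
whence $\phi(x,y) = (u,v)$ lies on the graph of $\mathcal{H}_{n,p}$. Reading the same line backwards shows that $\phi^{-1}$ sends the graph of $\mathcal{H}_{n,p}$ into the graph of $\mathcal{D}_{n}$, so the two restrictions are mutually inverse and the asserted one-to-one correspondence follows at once.

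I expect essentially no obstacle here; the single point that demands care is the well-definedness of $x^{1/p}$ for negative $x$, which is precisely where the oddness of $p$ enters and which is available in the context where the lemma will be applied. Everything else is a one-line substitution into the composition identity $\mathcal{H}_{n,p} = p\,\mathcal{D}_{n}\circ(x \mapsto x^{p})$.
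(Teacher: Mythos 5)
Your proposal is correct and is essentially the paper's own proof: the paper simply declares the lemma ``immediate from the definition $\mathcal{H}_{n,p}(x)=p\mathcal{D}_{n}(x^{p})$,'' and your argument is precisely that observation spelled out, with the (worthwhile) extra remark that oddness of $p$ makes $x\mapsto x^{1/p}$ a well-defined bijection of $\mathbb{R}$, so the correspondence holds on all of $\mathbb{R}^{2}$.
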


\begin{proof}
Immediate from the definition $\mathcal{H}_{n,p}\left(  x\right)
=p\mathcal{D}_{n}\left(  x^{p}\right)  $.
\end{proof}

Now we have collected enough information on the shapes of the graphs of
$\mathcal{D}_{n}$ and $\mathcal{H}_{n,p}$ and their relationship, for deriving
a recurrence formula for $N_{n}$.

\begin{theorem}
[Recurrence]\label{thm:rec}Let $n$ is a product of distinct odd prime numbers.
Assume that $\Phi_{n}^{\prime}\left(  x\right)  $ has $N_{n}$ real roots and
that each of them is simple. Let $p$ be prime not diving and $n$ and
sufficiently large. Then $\Phi_{np}^{\prime}\left(  x\right)  $ has $2N_{n}+1$
real roots, that is, $N_{np}=2N_{n}+1$ and that each real root is simple.
\end{theorem}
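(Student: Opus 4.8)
The plan is to count the nonzero real roots of $\Phi_{np}'$ through its proxy. By Lemma~\ref{lem:equiv} the nonzero real roots of $\mathcal{D}_{np}$ coincide with those of $\Phi_{np}'$, with the same simplicity, and since $\Phi_{np}'(0)=\pm1\neq0$ the point $x=0$ contributes nothing. By Lemma~\ref{lem:DH}, a real $x\neq0$ is a root of $\mathcal{D}_{np}$ exactly when the graphs of $\mathcal{H}_{n,p}$ and $\mathcal{D}_n$ meet, and by the Gauss--Lucas theorem every such $x$ lies in $(-1,1)$. I would count these intersections on $(0,1)$ and $(-1,0)$ separately and prove simplicity by showing each is transversal. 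Since $p$ is odd, $t=x^p$ is an increasing bijection of each of these intervals onto itself, turning the relation into $\mathcal{D}_{np}(x)=p\,\mathcal{D}_n(t)-\mathcal{D}_n(t^{1/p})$; this is the form I will analyze. Throughout, write $[P]$ for $1$ if the statement $P$ holds and $0$ otherwise.

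On each side I would split the interval into a \emph{far} part $\{|x|\le 1-\delta\}$ and a \emph{near} part, a one-sided neighborhood of $\pm1$. On the far part $x^p\to0$, and since $\mathcal{D}_n(0)=0$ (Lemma~\ref{lem:shapes}) one has $\mathcal{D}_n(x^p)=O(x^p)$, so $\mathcal{H}_{n,p}(x)=p\,\mathcal{D}_n(x^p)$ and its derivative tend to $0$ uniformly as $p\to\infty$. Hence $\mathcal{D}_{np}$ converges to $-\mathcal{D}_n$ together with its first derivative, so for large $p$ it has exactly one simple root near each simple root of $\mathcal{D}_n$ and no others. Choosing $\delta$ so that all roots of $\mathcal{D}_n$ on the side lie in the far part, this produces exactly $N_n^{+}$ (resp. $N_n^{-}$) simple roots; call these the first family, contributing $N_n$ roots in all.

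On the near part I would pass to $t=x^p$, which sweeps across almost the whole side as $p\to\infty$. There $x=t^{1/p}$ stays within $\delta$ of $\pm1$, and since $\mathcal{D}_n(\pm1)>0$ (Lemma~\ref{lem:shapes}) the subtracted term $\mathcal{D}_n(t^{1/p})$ stays close to that positive constant; thus the roots are the crossings of the slowly varying small positive level $\ell=\mathcal{D}_n(t^{1/p})/p$ by the graph of $\mathcal{D}_n$. Because the roots of $\mathcal{D}_n$ are simple, $0$ is a regular value, so for large $p$ the level $\ell$ lies below every positive critical value of $\mathcal{D}_n$; each maximal interval on which $\mathcal{D}_n>0$ is then crossed exactly twice, except a hump that ends at $\pm1$ (where $\mathcal{D}_n$ stays above $\ell$), which is crossed once. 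The sign pattern of $\mathcal{D}_n$ alternates between consecutive simple roots, equals $(-1)^{k+1}=\operatorname{sign}\mathcal{D}_n'(0)$ just off $0$, and is $+$ just inside $\pm1$; feeding this, together with the parities of $N_n^{\pm}$ from the Parity proposition, into the hump count yields exactly $N_n^{+}+[k\text{ odd}]$ crossings near $+1$ and $N_n^{-}+[k\text{ even}]$ near $-1$, each transversal since $\ell$ is a regular value. Adding the two families gives $N_{np}^{+}=2N_n^{+}+[k\text{ odd}]$ and $N_{np}^{-}=2N_n^{-}+[k\text{ even}]$, hence $N_{np}=2N_n+1$ with every root simple.

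The hard part will be the near-endpoint count, and above all isolating the single extra root. The ``$+1$'' comes entirely from the hump adjacent to $0$: it is a full positive hump (two crossings) exactly when $\mathcal{D}_n$ is positive just off $0$, i.e. when $(-1)^{k+1}=+$ on the positive side and $(-1)^{k}=+$ on the negative side, and is a negative hump (no crossing) otherwise; reconciling this with the parities of $N_n^{\pm}$ is what forces precisely one side to carry the extra root. I also expect real work in showing the two families are disjoint and jointly exhaustive: the far roots sit at $t=x^p$ exponentially small, whereas the near crossings occur at $t$ of order $1/p$ or larger, so the cut $t=(1-\delta)^p$ separates them for large $p$, and none is lost at the cut because $\mathcal{D}_{np}(1-\delta)\to-\mathcal{D}_n(1-\delta)\neq0$. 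Making every estimate uniform, so that ``sufficiently large $p$'' can be quantified as in Remark~\ref{rem:formal}, is the technical core that presumably motivates the separate rigorous treatment in Section~\ref{sec:rig_proof}.
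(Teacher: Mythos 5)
Your proposal is correct in substance, and it shares the paper's scaffolding: the proxy $\mathcal{D}_{np}=\mathcal{H}_{n,p}-\mathcal{D}_{n}$ via Lemmas~\ref{lem:equiv} and~\ref{lem:DH}, Gauss--Lucas to confine everything to $(-1,1)$, and a perturbation argument on a fixed inner region where $\mathcal{H}_{n,p}$ is uniformly small and flat, producing $N_{n}$ roots near the roots of $\mathcal{D}_{n}$ (this is exactly the role of the region $|x|\le a$ and conditions~\ref{a1},~\ref{a2} of Definition~\ref{def:separate} in Section~\ref{sec:rig_proof}). Where you genuinely diverge is near $\pm1$. The paper splits that piece in two: a middle region $a<|x|<b$ where convexity of $\mathcal{H}_{n,p}$ (conditions~\ref{ab1},~\ref{ab2}, whose proof is what forces the curious condition~\ref{n''<} of Definition~\ref{def:narrow}) yields exactly one extra root on the side dictated by the parity of $k$, and an outer region $b\le|x|\le1$ where $\mathcal{H}_{n,p}$ is so large and steep (conditions~\ref{b1},~\ref{b2}) that intersections are pinned near the $N_{n}$ roots of $\mathcal{H}_{n,p}$. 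You instead substitute $t=x^{p}$ on the whole near-endpoint piece, turning $\mathcal{D}_{np}=0$ into crossings of $\mathcal{D}_{n}(t)$ with the small positive level $\ell(t)=\mathcal{D}_{n}(t^{1/p})/p$, and you recover the $N_{n}^{\pm}$ outer roots and the single extra root in one count, with the $+1$ sitting on the hump of $\mathcal{D}_{n}$ adjacent to $t=0$ and its side determined by $\operatorname{sign}\mathcal{D}_{n}^{\prime}(0)=(-1)^{k+1}$ --- parity bookkeeping in place of convexity. Your route buys a unified argument that dispenses with the \textquotedblleft narrow\textquotedblright/\textquotedblleft well-configured\textquotedblright\ apparatus and immediately yields the refined formulas $N_{np}^{\pm}=2N_{n}^{\pm}+\{0,1\}$, which the paper only states as a remark; its cost is that $\ell$ is not a constant level --- $\ell^{\prime}(t)=\mathcal{D}_{n}^{\prime}(t^{1/p})\,t^{1/p-1}/p^{2}$ blows up as $t$ approaches the cut $(1-\delta)^{p}$ --- so the two-crossings-per-hump claim must be localized to the $O(1/p)$-neighborhoods of the zeros of $\mathcal{D}_{n}$, where $|\mathcal{D}_{n}^{\prime}|$ is bounded below and $|\ell^{\prime}|=O(1/p)$, while elsewhere $\mathcal{D}_{n}(t)$ is either exponentially small or bounded away from $\ell$ so no crossings are possible. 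That uniformity work, which you correctly flag as the technical core, is the analogue of (and no heavier than) the estimates in Lemma~\ref{lem:separate}, so your proposal is a valid and arguably cleaner alternative to the paper's three-region proof.
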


\begin{proof}
We will provide an intuitive (geometric) sketch of the proof in the hope of
communicating the overall proof strategy. The rigorous and technical
implementation of the strategy will be given in the next section. See the
graphs of $\mathcal{D}_{n}\left(  x\right)  \ $and $\mathcal{H}_{n,p}\left(
x\right)  $ in the following diagram.%
\[
\psscalebox{0.7 0.7} { \begin{pspicture}(0,-6.382173)(20.356125,5.769509)
\definecolor{gren}{rgb}{0.0,0.8,0.0}
\psline[linecolor=black, linewidth=0.02, linestyle=dashed, dash=0.17638889cm 0.10583334cm](0.23999998,-0.29215893)(20.272,-0.30815893)(20.272,-0.30815893)
\psline[linecolor=black, linewidth=0.02, linestyle=dashed, dash=0.17638889cm 0.10583334cm](9.456,5.723841)(9.44,-6.372159)
\psbezier[linecolor=blue, linewidth=0.04](20.16,2.3958411)(19.479395,2.3770697)(14.814776,1.9517902)(13.712,1.3718410644531263)(12.609223,0.7918919)(11.912713,-0.41430375)(11.68,-1.012159)(11.447288,-1.6100141)(10.401294,-2.5003803)(9.824,-1.3161589)(9.246706,-0.13193752)(9.433269,1.0606818)(8.496,1.0358411)(7.558732,1.0110003)(8.043007,-1.9088346)(7.312,-1.892159)(6.5809927,-1.8754833)(7.3489523,0.7093865)(5.088,1.6118411)(2.8270478,2.5142956)(1.0358497,2.3881798)(0.23999998,2.459841)
\psbezier[linecolor=gren, linewidth=0.04](20.256,5.723841)(19.302683,5.615153)(17.665493,5.176291)(16.64,3.5478410644531277)(15.614507,1.919391)(15.415167,-4.7416983)(14.56,-4.772159)(13.704833,-4.8026195)(13.261545,-2.132384)(12.576,-1.4281589)(11.890455,-0.72393376)(10.14518,-0.29608124)(9.472,-0.30815893)(8.7988205,-0.32023662)(5.7368193,0.23825783)(5.136,1.3718411)(4.5351806,2.5054243)(4.703526,2.752311)(4.48,3.739841)(4.256474,4.727371)(4.250784,4.7481346)(3.968,4.699841)(3.6852162,4.6515474)(3.4279623,0.32152894)(3.312,-0.35615894)(3.1960375,-1.0338469)(3.047282,-4.9100695)(2.56,-4.756159)(2.072718,-4.602248)(1.9323686,-2.84379)(1.872,-1.700159)(1.8116313,-0.5565278)(1.8398522,0.5724472)(1.696,2.907841)(1.5521477,5.243235)(1.4480081,5.5198507)(0.25599998,5.739841)
\psdots[linecolor=red, dotsize=0.2](16.064,1.9798411)
\psdots[linecolor=red, dotsize=0.2](11.728,-0.86815894)
\psdots[linecolor=red, dotsize=0.2](7.888,-0.052158937)
\psdots[linecolor=red, dotsize=0.2](6.48,0.37984106)
\psdots[linecolor=red, dotsize=0.2](4.992,1.643841)
\psdots[linecolor=red, dotsize=0.2](3.568,2.123841)
\psdots[linecolor=red, dotsize=0.2](1.712,2.3958411)
\psdots[linecolor=blue,dotsize=0.2](12.032,-0.27615893)
\psdots[linecolor=blue,dotsize=0.2](7.824,-0.29215893)
\psdots[linecolor=blue,dotsize=0.2](6.72,-0.30815893)
\psdots[linecolor=gren,dotsize=0.2](1.84,-0.30815893)
\psdots[linecolor=gren,dotsize=0.2](3.328,-0.27615893)
\psdots[linecolor=gren,dotsize=0.2](15.648,-0.29215893)
\rput[tr](20,2.0){\huge$\mathcal{D}_n$}
\rput[tr](20,5.0){\huge$\mathcal{H}_{n,p}$}
\rput[t](0.0,-0.7){\Large$-1$}
\rput[t](20.3,-0.7){\Large$1$}
\end{pspicture}
}
\]

\noindent The intuitive proof sketch consists of making several observations
on the diagram. A few disclaimers first. We chose $n$ to be a product of two
odd primes. We exaggerated some important characteristics of the graphs and
the result, the graphs are \emph{not} to the scale. However, it is safe since
the discussion below depends on neither the choice of $n$ nor scale. Now let
us begin.

\begin{enumerate}
\item The blue curve is the graph of $\mathcal{D}_{n}\left(  x\right)  $ and
the green curve is the graph of $\mathcal{H}_{n,p}\left(  x\right)  $. The
blue dots are the non-zero root of $\mathcal{D}_{n}\left(  x\right)  $ and the
green dots are the non-zero root of $\mathcal{H}_{n,p}\left(  x\right)  $. The
red dots are the intersections of the graphs of $\mathcal{D}_{n}\left(
x\right)  $ and $\mathcal{H}_{n,p}\left(  x\right)  $.

\item From Lemma \ref{lem:equiv}, we see that the blue dots are real roots of
$\Phi_{n}^{\prime}\left(  x\right)  $. From Lemmas \ref{lem:equiv}
and~\ref{lem:DH}, we see that the red dots are real roots of $\Phi
_{np}^{\prime}\left(  x\right)  $. By Gauss-Lucas theorem \cite{Lucas1874},
all the real roots of $\Phi_{n}^{\prime}\left(  x\right)  $ and $\Phi
_{np}^{\prime}\left(  x\right)  $ are in $\left(  -1,1\right)  $. Hence there
are no other real roots. Therefore, we need to prove that the number of the
red dots is $2$ times the number of the blue dots plus $1$.

\item Now suppose that $p$ is sufficiently large. The diagram is shown in such
a $p$. Note that the roots of $\mathcal{D}_{n}\left(  x\right)  \ $and the
roots of $\mathcal{H}_{n,p}\left(  x\right)  $ are well separated.

\begin{enumerate}
\item We see that there is one red dot near every blue dot. It is because the
green curve is sufficiently flat and small near the blue dots and all blue
dots are assumed to be simple. Hence we get $N_{n}$ red dots near blue dots.

\item We also see that there is one red dot near every green dot. It is
because the blue green curve is sufficiently stiff and big near the green
dots. Recall that there is one-to-one correspondence between the green dots
and the blue dots. Thus we get $N_{n}$ red dots near green dots.

\item Now, we get to a tricky one. Note that there is one more red dot on the
negative side in between the right most green dot and the left most blue dot.
We explain why it exists. From Lemma \ref{lem:shapes}, we see that
$\mathcal{H}_{n,p}\left(  x\right)  $ is very flat near $x=0$. Since $p$ is
sufficiently large, it can stay flat until it passes well beyond the left most
blue dot.
\end{enumerate}

Summing up, we have $N_{np}=N_{n}+N_{n}+1=2N_{n}+1$. Furthermore all the red
points are simple because the blue curve and the green curve intersect
transversally. Hence, from Lemma~\ref{lem:equiv}, the roots of~$\Phi
_{np}^{\prime}$ are simple.
\end{enumerate}
\end{proof}

\subsection{Solve the recurrence: Proof of Main result (Theorem
\ref{thm:separated primes})\label{sec:proof_main}}

Finally we are ready to prove the main result of this paper (Theorem
\ref{thm:separated primes}). Let $n$ be the product of~$k$ distinct odd
primes, say $p_{1}<\cdots<p_{k}$ such that they are \textquotedblleft
sufficiently\textquotedblright\ separated from each other. From
Theorem~\ref{thm:rec} and Proposition~\ref{pro:ic}, we obtain the following
recurrence equation and the initial condition
\[
N_{p_{1}\cdots p_{k}}=\left\{
\begin{array}
[c]{ll}%
2N_{p_{1}\cdots p_{k-1}}+1 & \text{if }k>1\\
1 & \text{if }k=1
\end{array}
\right.
\]
By solving it, we immediately conclude that
\[
N_{p_{1}\cdots p_{k}}=2^{k}-1
\]
that is, $\Phi_{n}^{\prime}$ has exactly $2^{N}-1$ many real roots. By
induction, we also conclude that each of the real roots is simple. We have
finally proved the main result (Theorem \ref{thm:separated primes}).

\section{Rigorous proof of Theorem~\ref{thm:rec}}

\label{sec:rig_proof}

In the previous section, we gave an intuitive sketch of the proof of Theorem
\ref{thm:rec}. In this section, we provide a rigorous proof. First we need to
have some rigorous understanding of the shape of $\mathcal{D}_{n}$. It turns
out that we only need know about the the shape of $\mathcal{D}_{n}$ within a
sufficiently narrow horizontal strip.

\begin{definition}
[Narrow]\label{def:narrow}Let $n$ be the product of distinct odd primes. Let
$h$ be a positive real number. We say that a real number $h$ is \emph{narrow}
with respect to $n$ if

\begin{enumerate}
\item \label{n_interval}$h\in\left(  0,\mathcal{D}_{n}\left(  \pm1\right)
\right)  $

\item \label{n'<>}$\underset{{\scriptsize
\begin{array}
[c]{c}%
\left\vert x\right\vert \leq1\\
\left\vert \mathcal{D}_{n}\left(  x\right)  \right\vert \leq h
\end{array}
}}{\forall}\mathcal{D}_{n}^{\prime}\left(  x\right)  \neq0$

\item \label{n''<>0}$\underset{{\scriptsize \left\vert x\right\vert \leq
l}}{\forall}\mathcal{D}_{n}^{\prime\prime}\left(  x\right)  \neq
0\ \ \ \ \ $where $l=\min\limits_{\substack{\left\vert x\right\vert
\leq1\\\left\vert \mathcal{D}_{n}\left(  x\right)  \right\vert \geq
h}}\left\vert x\right\vert $

\item \label{n''<}$\underset{{\scriptsize \left\vert x\right\vert \leq
1}}{\forall}\left\vert \mathcal{D}_{n}^{\prime\prime}\left(  x\right)
\right\vert <\frac{1}{3l}$
\end{enumerate}
\end{definition}

\begin{remark}
The fourth condition looks mysterious. It will be justified/motivated later
when it is used in proving Lemma \ref{lem:separate}.
\end{remark}

\begin{lemma}
\label{lem:narrow}Let $n$ be the product of distinct odd primes such that the
real roots of $\Phi_{n}^{\prime}\left(  x\right)  $ are simple. Then there
exists a narrow $h$ with respect to $n$.
\end{lemma}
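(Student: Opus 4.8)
The plan is to fix $n$ and produce a single small $h>0$ meeting all four requirements of Definition \ref{def:narrow} at once; each condition will hold as soon as $h$ is taken below a finite list of positive thresholds. First I would record the regularity and the anchor values of $\mathcal{D}_n$. Since $\Phi_n$ has no real roots and $\Phi_n(0)=1>0$, we have $\Phi_n>0$ on all of $\mathbb{R}$, so $\mathcal{D}_n=x\Phi_n'/\Phi_n$ is $C^\infty$ on $[-1,1]$ and its zeros there are exactly the zeros of the numerator $x\Phi_n'(x)$, a finite set. By Lemma \ref{lem:shapes} we have $\mathcal{D}_n(\pm 1)>0$ (so $\pm 1$ are not zeros and the interval in condition \ref{n_interval} is nonempty), and $\mathcal{D}_n'(0)=\mathcal{D}_n''(0)=(-1)^{k+1}\neq 0$. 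Combining the hypothesis that the real roots of $\Phi_n'$ are simple with Lemma \ref{lem:equiv} and with $\mathcal{D}_n'(0)\neq 0$, every zero of $\mathcal{D}_n$ in $[-1,1]$ is simple.

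Condition \ref{n'<>} I would handle by a sublevel-set argument. Let $z_1,\dots,z_m$ be the finitely many simple zeros of $\mathcal{D}_n$ in $[-1,1]$. Because $\mathcal{D}_n'(z_j)\neq 0$, continuity gives open intervals $U_j\ni z_j$ on which $\mathcal{D}_n'$ does not vanish. On the compact set $K=[-1,1]\setminus\bigcup_j U_j$ the function $\mathcal{D}_n$ has no zero, so $\delta:=\min_K|\mathcal{D}_n|>0$. Then for any $h<\delta$ the whole sublevel set $\{|x|\le 1:\ |\mathcal{D}_n(x)|\le h\}$ is forced into $\bigcup_j U_j$, where $\mathcal{D}_n'\neq 0$; this is exactly condition \ref{n'<>}.

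The last two conditions both hinge on controlling $l=l(h)=\min\{|x|:\ |x|\le 1,\ |\mathcal{D}_n(x)|\ge h\}$, and the key observation is that $l(h)\to 0$ as $h\to 0^{+}$. Indeed $l$ is well defined and positive for every $h$ below the threshold of condition \ref{n_interval} (the defining set is compact and contains $\pm 1$, while $0$ is excluded because $\mathcal{D}_n(0)=0$), and it is monotone in $h$. Since $\mathcal{D}_n'(0)\neq 0$, $\mathcal{D}_n$ is strictly monotone near $0$ and invertible there, so the nearest point to $0$ at which $|\mathcal{D}_n|$ reaches height $h$ has magnitude tending to $0$ with $h$; hence $l(h)\to 0$. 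With this in hand both conditions follow for small $h$: because $\mathcal{D}_n''(0)\neq 0$ there is $\varepsilon_0>0$ with $\mathcal{D}_n''\neq 0$ on $[-\varepsilon_0,\varepsilon_0]$, so taking $h$ small enough that $l(h)\le\varepsilon_0$ yields condition \ref{n''<>0}; and setting $M=\max_{|x|\le 1}|\mathcal{D}_n''(x)|$ (finite, and positive since $\mathcal{D}_n''(0)\neq 0$), taking $h$ small enough that $l(h)<1/(3M)$ gives $|\mathcal{D}_n''|<1/(3l)$ on $[-1,1]$, which is condition \ref{n''<}. Choosing $h$ below the minimum of the four thresholds finishes the proof.

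I expect the only genuinely delicate point to be the coupling of conditions \ref{n''<>0} and \ref{n''<} through the quantity $l$: condition \ref{n''<} imposes the a priori awkward bound $|\mathcal{D}_n''|<1/(3l)$, which becomes harmless only once one notices that $l$ itself can be driven to $0$. So the heart of the argument is the elementary but essential limit $l(h)\to 0$, resting on $\mathcal{D}_n'(0)\neq 0$; everything else is continuity-and-compactness bookkeeping.
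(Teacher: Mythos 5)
Your proposal is correct and takes essentially the same route as the paper: a separate positive threshold for each of the four conditions of Definition \ref{def:narrow}, combined by taking the minimum, with the whole argument resting on the key limit $l(h)\to 0$ as $h\to 0^{+}$. The only differences are cosmetic: for condition \ref{n'<>} you bound $\left\vert \mathcal{D}_{n}\right\vert$ away from zero on the complement of neighborhoods of its (simple) zeros, where the paper instead bounds $\left\vert \mathcal{D}_{n}\right\vert$ away from zero on the critical set $\left\{ x : \mathcal{D}_{n}^{\prime}\left( x\right) =0\right\}$ --- dual but equally routine compactness arguments --- and you supply an explicit proof (via strict monotonicity near $0$ from $\mathcal{D}_{n}^{\prime}\left( 0\right) \neq 0$) of the limit $l(h)\to 0$, which the paper simply asserts as immediate.
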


\begin{proof}
We will show that there exists $h$ that satisfy each condition. Then we can
take their minimum.

\begin{enumerate}
\item $h\in\left(  0,\mathcal{D}_{n}\left(  \pm1\right)  \right)  $.

Such a $h$ exists since $\mathcal{D}_{n}\left(  \pm1\right)  \neq0$ (Lemma
\ref{lem:shapes}).

\item $\underset{{\scriptsize
\begin{array}
[c]{c}%
\left\vert x\right\vert \leq1\\
\left\vert \mathcal{D}_{n}\left(  x\right)  \right\vert \leq h
\end{array}
}}{\forall}\mathcal{D}_{n}^{\prime}\left(  x\right)  \neq0$.

Let $\overline{h}=\min\limits_{\substack{\left\vert x\right\vert
\leq1\\\mathcal{D}_{n}^{\prime}\left(  x\right)  =0}}\left\vert \mathcal{D}%
_{n}\left(  x\right)  \right\vert $. Note $\overline{h}\neq0$ since all the
real roots of $\mathcal{D}_{n}\left(  x\right)  $ are simple due to Lemma
\ref{lem:shapes} and the assumption that the real roots of $\Phi_{n}^{\prime
}\left(  x\right)  ,$ in turn $\mathcal{D}_{n}\left(  x\right)  $, are simple.
Hence any $h<$ $\overline{h}$ satisfies the condition the condition.

\item $\underset{{\scriptsize \left\vert x\right\vert \leq l}}{\forall
}\mathcal{D}_{n}^{\prime\prime}\left(  x\right)  \neq0$

Immediate from the facts that $\mathcal{D}_{n}^{\prime\prime}\left(  0\right)
\neq0$ (Lemma \ref{lem:shapes}) and that $l\rightarrow+0$ as $h\rightarrow+0$.

\item $\underset{{\scriptsize \left\vert x\right\vert \leq1}}{\forall
}\left\vert \mathcal{D}_{n}^{\prime\prime}\left(  x\right)  \right\vert
<\frac{1}{3l}\ $

Immediate from the facts that $\left\vert D_{n}^{\prime\prime}\left(
x\right)  \right\vert \ $is bounded from above and that $l\rightarrow+0$ as
$h\rightarrow+0$.
\end{enumerate}
\end{proof}

\noindent The following definition is motivated by the need to configure the
graphs of $\mathcal{D}_{n}$ and $\mathcal{H}_{n,p}^{\prime}$ so that it is
easy to study their intersections and the transversality.

\begin{definition}
[Well-configured]\label{def:separate}Let $n$ be the product of $k$ distinct
odd primes. Let $p$ be an odd prime not dividing $n$. We say that the graphs
of $\mathcal{D}_{n}\left(  x\right)  $ and $\mathcal{H}_{n,p}\left(  x\right)
$ are well-configured if the following conditions hold, where
\begin{align*}
a  &  =\max\limits_{\substack{\left\vert x\right\vert \leq1\\\mathcal{D}%
_{n}^{\prime\prime}\left(  x\right)  =0\ \text{or\ }\left\vert \mathcal{D}%
_{n}\left(  x\right)  \right\vert \leq h}}\left\vert x\right\vert \\
b  &  =l^{1/p}%
\end{align*}

\begin{enumerate}
\item \label{a<b}$a<b$

\item \label{a1}$\max\limits_{\left\vert x\right\vert \leq a}\left\vert
\mathcal{H}_{n,p}\left(  x\right)  \right\vert <h$

\item \label{a2}$\max\limits_{\substack{\left\vert x\right\vert \leq
a\\\left\vert \mathcal{D}_{n}\left(  x\right)  \right\vert \leq h}}\left\vert
\mathcal{H}_{n,p}^{\prime}\mathcal{H}_{n,p}^{\prime}\left(  x\right)
\right\vert <\min\limits_{\substack{\left\vert x\right\vert \leq a\\\left\vert
\mathcal{D}_{n}\left(  x\right)  \right\vert \leq h}}\left\vert \mathcal{D}%
_{n}^{\prime}\left(  x\right)  \right\vert $.

\item \label{ab1}$\min\limits_{a\leq x\leq b}\mathcal{H}_{n,p}^{\prime\prime
}\left(  x\right)  >0$ when $k$ is odd.

\item \label{ab2}$\min\limits_{-b\leq x\leq-a}\mathcal{H}_{n,p}^{\prime\prime
}\left(  x\right)  >0$ when $k$ is even.

\item \label{b1}$\min\limits_{\substack{b\leq\left\vert x\right\vert
\leq1\\\left\vert \mathcal{D}_{n}\left(  x^{p}\right)  \right\vert \geq
h}}\left\vert \mathcal{H}_{n,p}\left(  x\right)  \right\vert >\max
\limits_{\left\vert x\right\vert \leq1}\left\vert \mathcal{D}_{n}\left(
x\right)  \right\vert $.

\item \label{b2}$\min\limits_{\substack{b\leq\left\vert x\right\vert
\leq1\\\left\vert \mathcal{D}_{n}\left(  x^{p}\right)  \right\vert \leq
h}}\left\vert \mathcal{H}_{n,p}^{\prime}\left(  x\right)  \right\vert
>\max\limits_{\left\vert x\right\vert \leq1}\left\vert \mathcal{D}_{n}%
^{\prime}\left(  x\right)  \right\vert .$
\end{enumerate}
\end{definition}

\begin{lemma}
\label{lem:separate}Let $n$ be the product of $k$ distinct odd primes. Then
for sufficiently large $p$, the graphs of $\mathcal{D}_{n}\left(  x\right)  $
and $\mathcal{H}_{n,p}\left(  x\right)  $ are well-configured.
\end{lemma}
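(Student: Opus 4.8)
The plan is to fix, once and for all, a narrow $h$ with respect to $n$; its existence is Lemma~\ref{lem:narrow}. Once $h$ is fixed the numbers $l$ and $a$ depend only on $n$, not on $p$, and one checks directly that $0<l<1$ and $a<1$: indeed $|\mathcal{D}_n(\pm1)|>h$ and $\mathcal{D}_n''(\pm1)\neq0$ by Lemma~\ref{lem:shapes}, so neither $x=\pm1$ meets the defining conditions of $a$, and $|\mathcal{D}_n(\pm1)|>h>0=|\mathcal{D}_n(0)|$ forces $0<l<1$. Consequently $b=l^{1/p}\to1$ as $p\to\infty$. I would then verify the seven conditions of Definition~\ref{def:separate} one at a time, producing for each a threshold on $p$ beyond which it holds, and finally take $p$ larger than the maximum of the seven thresholds. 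The only computational input is the chain rule applied to $\mathcal{H}_{n,p}(x)=p\,\mathcal{D}_n(x^p)$,
\[
\mathcal{H}_{n,p}'(x)=p^2x^{p-1}\mathcal{D}_n'(x^p),\qquad
\mathcal{H}_{n,p}''(x)=p^2(p-1)x^{p-2}\mathcal{D}_n'(x^p)+p^3x^{2p-2}\mathcal{D}_n''(x^p),
\]
together with the boundedness of $\mathcal{D}_n,\mathcal{D}_n',\mathcal{D}_n''$ on $[-1,1]$ and the positivity of the various minima of $|\mathcal{D}_n'|$ guaranteed by narrowness (Definition~\ref{def:narrow}).

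The six ``size'' conditions are all driven by powers of $p$ and are routine. Condition~\ref{a<b} follows from $a<1=\lim_p b$. For conditions~\ref{a1} and~\ref{a2}, on $|x|\le a<1$ we have $|x^p|\le a^p$, so from $\mathcal{D}_n(0)=0$ and boundedness one gets $|\mathcal{H}_{n,p}(x)|\le p\,a^p\max|\mathcal{D}_n'|$ and $|\mathcal{H}_{n,p}'(x)|\le p^2a^{p-1}\max|\mathcal{D}_n'|$, both tending to $0$ and hence eventually below the fixed positive right-hand sides $h$ and $\min|\mathcal{D}_n'|$ (positivity of the latter is condition~\ref{n'<>} of narrowness). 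Conditions~\ref{b1} and~\ref{b2} go the opposite way: on $b\le|x|\le1$ with $|\mathcal{D}_n(x^p)|\ge h$ one has $|\mathcal{H}_{n,p}(x)|=p|\mathcal{D}_n(x^p)|\ge ph$, while on $b\le|x|\le1$ with $|\mathcal{D}_n(x^p)|\le h$ one has $|x|^{p-1}\ge b^{p-1}=l^{(p-1)/p}\to l$ and $|\mathcal{D}_n'(x^p)|$ bounded below by a fixed positive constant (again condition~\ref{n'<>}), so $|\mathcal{H}_{n,p}'(x)|\ge p^2\,l^{(p-1)/p}\,(\text{const})\to\infty$. Since the four right-hand sides are constants independent of $p$, large $p$ wins in each case.

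The crux is the convexity conditions~\ref{ab1} and~\ref{ab2} on the ``bridge'' $a\le|x|\le b$. The governing observation is that for $a\le|x|\le b$ the argument $x^p$ lies in $[a^p,l]\subseteq[0,l]$ (resp.\ $[-l,-a^p]\subseteq[-l,0]$ on the negative bridge); there, by narrowness conditions~\ref{n'<>} and~\ref{n''<>0}, neither $\mathcal{D}_n'(x^p)$ nor $\mathcal{D}_n''(x^p)$ vanishes, so both keep the sign they have at $0$, namely $(-1)^{k+1}$ by Lemma~\ref{lem:shapes}. For $k$ odd on the positive bridge $[a,b]$ this makes both summands of $\mathcal{H}_{n,p}''$ positive, so condition~\ref{ab1} is automatic. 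For $k$ even on the negative bridge $[-b,-a]$ the situation is genuinely delicate: writing $\mathcal{H}_{n,p}''(x)=p^2x^{p-2}\big[(p-1)\mathcal{D}_n'(x^p)+p\,x^p\mathcal{D}_n''(x^p)\big]$ and using $x^{p-2}<0$, condition~\ref{ab2} reduces to showing the bracket is negative, i.e.\ $(p-1)\,|\mathcal{D}_n'(x^p)|>p\,|x^p|\,|\mathcal{D}_n''(x^p)|$.

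This last inequality is exactly where the fourth, ``mysterious'' narrowness condition~\ref{n''<} enters, and I expect it to be the main obstacle. The bound $|\mathcal{D}_n''|<\frac1{3l}$ gives $|x^p|\,|\mathcal{D}_n''(x^p)|\le l\cdot\frac1{3l}=\frac13$, while integrating $\mathcal{D}_n''$ from $0$ (where $|\mathcal{D}_n'(0)|=1$ by Lemma~\ref{lem:shapes}) yields $|\mathcal{D}_n'(x^p)|\ge1-\frac13=\frac23$. The resulting two-to-one gap between $\frac23$ and $\frac13$ beats the factor $p/(p-1)$ for every $p>2$, which closes condition~\ref{ab2}. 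Tracking these constants carefully so the $\frac1{3l}$ bound yields strictly more than the required margin is the one place where some care is needed; the rest is bookkeeping. Taking $p$ beyond the maximum of the seven thresholds then yields a well-configured pair of graphs, completing the proof.
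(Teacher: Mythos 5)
Your proposal is correct and follows essentially the same route as the paper's proof: condition-by-condition verification, with the powers of $p$ ($p^2a^p\to0$, $ph\to\infty$, $p^2l^{(p-1)/p}\to\infty$) handling the size conditions, and sign-preservation of $\mathcal{D}_n'$ and $\mathcal{D}_n''$ on $[-l,l]$ making condition~\ref{ab1} automatic for $k$ odd. Your treatment of the delicate case~\ref{ab2} — the mean value theorem giving $|\mathcal{D}_n'(x^p)|>\tfrac{2}{3}$ versus $\tfrac{p}{p-1}|x^p||\mathcal{D}_n''(x^p)|\le\tfrac{1}{2}$ — is the same argument as the paper's bound $1-3l\max|\mathcal{D}_n''|>0$, merely with the constants arranged differently.
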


\begin{proof}
Let $n$ be the product of $k$ distinct odd primes. Let $h$ be narrow with
respect to $n$. Let $p$ be sufficiently large odd prime not dividing $n$. The
claim of the lemma follows immediately from the following sub-claims.

\begin{enumerate}
\item $a<b$.

Immediate from $a<1$ and $l>0$ (Lemma \ref{lem:shapes} and Definition
\ref{def:narrow}-\ref{n_interval})

\item $\max\limits_{\left\vert x\right\vert \leq a}\left\vert \mathcal{H}%
_{n,p}\left(  x\right)  \right\vert <h$.

Note%
\begin{align*}
\max_{\left\vert x\right\vert \leq a}\left\vert \mathcal{H}_{n,p}\left(
x\right)  \right\vert  &  \leq\left\vert \mathcal{H}_{n,p}\left(  0\right)
\right\vert +\max_{\left\vert x\right\vert \leq a}\left\vert \mathcal{H}%
_{n,p}^{\prime}\left(  x\right)  \right\vert a\ \ \text{from the mean value
theorem}\\
&  =\max_{\left\vert x\right\vert \leq a}\left\vert \mathcal{H}_{n,p}^{\prime
}\left(  x\right)  \right\vert a\ \ \text{from Lemma \ref{lem:shapes}}\\
&  =\max_{\left\vert x\right\vert \leq a}\left\vert \left(  p\mathcal{D}%
_{n}\left(  x^{p}\right)  \right)  ^{\prime}\right\vert a\\
&  =\max_{\left\vert x\right\vert \leq a}\left\vert p^{2}x^{p-1}%
\mathcal{D}_{n}^{\prime}\left(  x^{p}\right)  \right\vert a\\
&  \leq p^{2}a^{p}\max_{\left\vert x\right\vert \leq a}\left\vert
\mathcal{D}_{n}^{\prime}\left(  x^{p}\right)  \right\vert \\
&  \leq p^{2}a^{p}\max_{\left\vert x\right\vert \leq1}\left\vert
\mathcal{D}_{n}^{\prime}\left(  x\right)  \right\vert
\end{align*}
Since $a<1$, for sufficiently large $p$, we have
\[
p^{2}a^{p}\max_{\left\vert x\right\vert \leq1}\left\vert \mathcal{D}%
_{n}^{\prime}\left(  x\right)  \right\vert <h
\]

Hence, for sufficiently large $p,$ we have%
\[
\max_{\left\vert x\right\vert \leq a}\left\vert \mathcal{H}_{n,p}\left(
x\right)  \right\vert <h
\]

\item $\max\limits_{\substack{\left\vert x\right\vert \leq a\\\left\vert
\mathcal{D}_{n}\left(  x\right)  \right\vert \leq h}}\left\vert \mathcal{H}%
_{n,p}^{\prime}\left(  x\right)  \right\vert <\min
\limits_{\substack{\left\vert x\right\vert \leq a\\\left\vert \mathcal{D}%
_{n}\left(  x\right)  \right\vert \leq h}}\left\vert \mathcal{D}_{n}^{\prime
}\left(  x\right)  \right\vert $.

Note, using the same argument as in the previous claim, we have
\[
\max\limits_{\substack{\left\vert x\right\vert \leq a\\\left\vert
\mathcal{D}_{n}\left(  x\right)  \right\vert \leq h}}\left\vert \mathcal{H}%
_{n,p}^{\prime}\left(  x\right)  \right\vert \leq p^{2}a^{p-1}\max
\limits_{\substack{\left\vert x\right\vert \leq a\\\left\vert \mathcal{D}%
_{n}\left(  x\right)  \right\vert \leq h}}\left\vert \mathcal{D}_{n}^{\prime
}\left(  x\right)  \right\vert \
\]
Since $a<1$, for sufficiently large $p$, we have
\[
p^{2}a^{p-1}\max\limits_{\substack{\left\vert x\right\vert \leq a\\\left\vert
\mathcal{D}_{n}\left(  x\right)  \right\vert \leq h}}\left\vert \mathcal{D}%
_{n}^{\prime}\left(  x\right)  \right\vert <\min\limits_{\substack{\left\vert
x\right\vert \leq a\\\left\vert \mathcal{D}_{n}\left(  x\right)  \right\vert
\leq h}}\left\vert \mathcal{D}_{n}^{\prime}\left(  x\right)  \right\vert
\]
Hence, for sufficiently large $p,$ we have
\[
\max\limits_{\substack{\left\vert x\right\vert \leq a\\\left\vert
\mathcal{D}_{n}\left(  x\right)  \right\vert \leq h}}\left\vert \mathcal{H}%
_{n,p}^{\prime}\left(  x\right)  \right\vert <\min
\limits_{\substack{\left\vert x\right\vert \leq a\\\left\vert \mathcal{D}%
_{n}\left(  x\right)  \right\vert \leq h}}\left\vert \mathcal{D}_{n}^{\prime
}\left(  x\right)  \right\vert .
\]

\item $\min\limits_{a\leq x\leq b}\mathcal{H}_{n,p}^{\prime\prime}\left(
x\right)  >0$ when $k$ is odd.

Note%
\begin{align*}
\min\limits_{a\leq x\leq b}\mathcal{H}_{n,p}^{\prime\prime}\left(  x\right)
&  =\min\limits_{a\leq x\leq b}\left(  p^{2}\left(  p-1\right)  x^{p-2}%
\mathcal{D}_{n}^{\prime}\left(  x^{p}\right)  +p^{3}x^{2p-2}\mathcal{D}%
_{n}^{\prime\prime}\left(  x^{p}\right)  \right) \\
&  =\min\limits_{a^{p}\leq x\leq l}\left(  p^{2}\left(  p-1\right)  x^{\left(
p-2\right)  /p}\mathcal{D}_{n}^{\prime}\left(  x\right)  +p^{3}x^{\left(
2p-2\right)  /p}\mathcal{D}_{n}^{\prime\prime}\left(  x\right)  \right) \\
&  >0\ \ \ \ \ \ \text{(since }x,\text{ }\mathcal{D}_{n}^{\prime}\left(
x\right)  ,\ \mathcal{D}_{n}^{\prime\prime}\left(  x\right)  >0\ \text{over
}0<x\leq l\text{)}\ \\
&  \ \ \ \ \ \ \ \ \ \ \ \text{(Lemma \ref{lem:shapes}, Definition
\ref{def:narrow}-\ref{n'<>},\ref{n''<>0})}%
\end{align*}
Thus for all $p$, we have
\[
\min\limits_{a\leq x\leq b}\mathcal{H}_{n,p}^{\prime\prime}\left(  x\right)
>0
\]
when $k$ is odd.

\item $\min\limits_{-b\leq x\leq-a}\mathcal{H}_{n,p}^{\prime\prime}\left(
x\right)  >0$ when $k$ is even.

Note%
\begin{align*}
\min\limits_{-b\leq x\leq-a}\mathcal{H}_{n,p}^{\prime\prime}\left(  x\right)
=  &  \min\limits_{-b\leq x\leq-a}\left(  p^{2}\left(  p-1\right)
x^{p-2}\mathcal{D}_{n}^{\prime}\left(  x^{p}\right)  +p^{3}x^{2p-2}%
\mathcal{D}_{n}^{\prime\prime}\left(  x^{p}\right)  \right) \\
=  &  \min\limits_{-l\leq x\leq-a^{p}}\left(  p^{2}\left(  p-1\right)
x^{\left(  p-2\right)  /p}\mathcal{D}_{n}^{\prime}\left(  x\right)
+p^{3}x^{\left(  2p-2\right)  /p}\mathcal{D}_{n}^{\prime\prime}\left(
x\right)  \right) \\
=  &  \min\limits_{-l\leq x\leq-a^{p}}p^{2}\left(  p-1\right)  x^{\left(
p-2\right)  /p}\left(  \mathcal{D}_{n}^{\prime}\left(  x\right)  +\frac
{p}{p-1}x\mathcal{D}_{n}^{\prime\prime}\left(  x\right)  \right) \\
=  &  \min\limits_{-l\leq x\leq-a^{p}}p^{2}\left(  p-1\right)  x^{\left(
p-2\right)  /p}\left(  \mathcal{D}_{n}^{\prime}\left(  0\right)
+x\mathcal{D}_{n}^{\prime\prime}\left(  \xi_{x}\right)  +\frac{p}%
{p-1}x\mathcal{D}_{n}^{\prime\prime}\left(  x\right)  \right)  \ \text{for
}\xi_{x}\in\left[  x,0\right] \\
&  \text{(from the mean value theorem)}\\
=  &  \min\limits_{-l\leq x\leq-a^{p}}p^{2}\left(  p-1\right)  x^{\left(
p-2\right)  /p}\left(  -1+x\mathcal{D}_{n}^{\prime\prime}\left(  \xi
_{x}\right)  +\frac{p}{p-1}x\mathcal{D}_{n}^{\prime\prime}\left(  x\right)
\right) \\
&  \text{(from Lemma \ref{lem:shapes})}\\
=  &  \min\limits_{-l\leq x\leq-a^{p}}p^{2}\left(  p-1\right)  \left\vert
x\right\vert ^{\left(  p-2\right)  /p}\left(  1-\left\vert x\right\vert
\left\vert \mathcal{D}_{n}^{\prime\prime}\left(  \xi_{x}\right)  \right\vert
-\frac{p}{p-1}\left\vert x\right\vert \left\vert \mathcal{D}_{n}^{\prime
\prime}\left(  x\right)  \right\vert \right) \\
&  \text{(since }x,\mathcal{D}_{n}^{\prime\prime}\left(  \xi_{x}\right)
,\mathcal{D}_{n}^{\prime\prime}\left(  x\right)  <0\ \text{over }-l\leq
x\leq-a^{p}\text{)}\\
\geq &  \min\limits_{-l\leq x\leq-a^{p}}p^{2}\left(  p-1\right)  \left\vert
x\right\vert ^{\left(  p-2\right)  /p}\left(  1-l\ \max_{\left\vert
\xi\right\vert \leq1}\left\vert \mathcal{D}_{n}^{\prime\prime}\left(
\xi\right)  \right\vert -\frac{p}{p-1}l\ \max_{\left\vert \xi\right\vert
\leq1}\left\vert \mathcal{D}_{n}^{\prime\prime}\left(  \xi\right)  \right\vert
\right) \\
>  &  \min\limits_{-l\leq x\leq-a^{p}}p^{2}\left(  p-1\right)  \left\vert
x\right\vert ^{\left(  p-2\right)  /p}\left(  1-l\ \max_{\left\vert
\xi\right\vert \leq1}\left\vert \mathcal{D}_{n}^{\prime\prime}\left(
\xi\right)  \right\vert -2l\ \max_{\left\vert \xi\right\vert \leq1}\left\vert
\mathcal{D}_{n}^{\prime\prime}\left(  \xi\right)  \right\vert \right)
\ \ \text{(since }p\geq3\text{)}\\
=  &  \min\limits_{-l\leq x\leq-a^{p}}p^{2}\left(  p-1\right)  \left\vert
x\right\vert ^{\left(  p-2\right)  /p}\left(  1-3l\ \max_{\left\vert
\xi\right\vert \leq1}\left\vert \mathcal{D}_{n}^{\prime\prime}\left(
\xi\right)  \right\vert \right) \\
=  &  3l\min\limits_{-l\leq x\leq-a^{p}}p^{2}\left(  p-1\right)  \left\vert
x\right\vert ^{\left(  p-2\right)  /p}\left(  \frac{1}{3l}-\ \max_{\left\vert
\xi\right\vert \leq1}\left\vert \mathcal{D}_{n}^{\prime\prime}\left(
\xi\right)  \right\vert \right) \\
>  &  0\ \ \ \text{(from Definition \ref{def:narrow}-\ref{n''<})}%
\end{align*}
Thus for all $p$, we have $\min\limits_{-b\leq x\leq-a}\mathcal{H}%
_{n,p}^{\prime\prime}\left(  x\right)  >0$ when $k\ $is even.

\item $\min\limits_{\substack{b\leq\left\vert x\right\vert \leq1\\\left\vert
\mathcal{D}_{n}\left(  x^{p}\right)  \right\vert \geq h}}\left\vert
\mathcal{H}_{n,p}\left(  x\right)  \right\vert >\max\limits_{\left\vert
x\right\vert \leq1}\left\vert \mathcal{D}_{n}\left(  x\right)  \right\vert $.

Note%
\[
\min_{\substack{b\leq\left\vert x\right\vert \leq1\\\left\vert \mathcal{D}%
_{n}\left(  x^{p}\right)  \right\vert \geq h}}\left\vert \mathcal{H}%
_{n,p}\left(  x\right)  \right\vert =\min_{\substack{b\leq\left\vert
x\right\vert \leq1\\\left\vert \mathcal{D}_{n}\left(  x^{p}\right)
\right\vert \geq h}}\left\vert p\mathcal{D}_{n}\left(  x^{p}\right)
\right\vert =\min_{\substack{l\leq\left\vert x^{p}\right\vert \leq
1\\\left\vert \mathcal{D}_{n}\left(  x^{p}\right)  \right\vert \geq
h}}\left\vert p\mathcal{D}_{n}\left(  x^{p}\right)  \right\vert =\min
_{\substack{l\leq\left\vert x\right\vert \leq1\\\left\vert \mathcal{D}%
_{n}\left(  x\right)  \right\vert \geq h}}\left\vert p\mathcal{D}_{n}\left(
x\right)  \right\vert =p\min_{\substack{l\leq\left\vert x\right\vert
\leq1\\\left\vert \mathcal{D}_{n}\left(  x\right)  \right\vert \geq
h}}\left\vert \mathcal{D}_{n}\left(  x\right)  \right\vert
\]
Since $h>0$, for sufficiently large $p$, we have%
\[
p\min_{\substack{l\leq\left\vert x\right\vert \leq1\\\left\vert \mathcal{D}%
_{n}\left(  x\right)  \right\vert \geq h}}\left\vert \mathcal{D}_{n}\left(
x\right)  \right\vert >\max\limits_{\left\vert x\right\vert \leq1}\left\vert
\mathcal{D}_{n}\left(  x\right)  \right\vert
\]
Hence, for sufficiently large $p,$ we have%
\[
\min_{\substack{b\leq\left\vert x\right\vert \leq1\\\left\vert \mathcal{D}%
_{n}\left(  x^{p}\right)  \right\vert \geq h}}\left\vert \mathcal{H}%
_{n,p}\left(  x\right)  \right\vert >\max\limits_{\left\vert x\right\vert
\leq1}\left\vert \mathcal{D}_{n}\left(  x\right)  \right\vert
\]

\item $\min\limits_{\substack{b\leq\left\vert x\right\vert \leq1\\\left\vert
\mathcal{D}_{n}\left(  x^{p}\right)  \right\vert \leq h}}\left\vert
\mathcal{H}_{n,p}^{\prime}\left(  x\right)  \right\vert >\max
\limits_{\left\vert x\right\vert \leq1}\left\vert \mathcal{D}_{n}^{\prime
}\left(  x\right)  \right\vert .$

Note%
\begin{align*}
\min\limits_{_{\substack{b\leq\left\vert x\right\vert \leq1\\\left\vert
\mathcal{D}_{n}\left(  x^{p}\right)  \right\vert \leq h}}}\left\vert
\mathcal{H}_{n,p}^{\prime}\left(  x\right)  \right\vert  &  =\min
\limits_{_{\substack{b\leq\left\vert x\right\vert \leq1\\\left\vert
\mathcal{D}_{n}\left(  x^{p}\right)  \right\vert \leq h}}}\left(
p\mathcal{D}_{n}\left(  x^{p}\right)  \right)  ^{\prime}\\
&  =\min\limits_{_{\substack{b\leq\left\vert x\right\vert \leq1\\\left\vert
\mathcal{D}_{n}\left(  x^{p}\right)  \right\vert \leq h}}}\left\vert
p^{2}x^{p-1}\mathcal{D}_{n}^{\prime}\left(  x^{p}\right)  \right\vert \\
&  \geq p^{2}\min\limits_{_{\substack{b\leq\left\vert x\right\vert
\leq1\\\left\vert \mathcal{D}_{n}\left(  x^{p}\right)  \right\vert \leq h}%
}}\left\vert x\right\vert ^{p-1}\min\limits_{_{\substack{b\leq\left\vert
x\right\vert \leq1\\\left\vert \mathcal{D}_{n}\left(  x^{p}\right)
\right\vert \leq h}}}\left\vert \mathcal{D}_{n}^{\prime}\left(  x^{p}\right)
\right\vert \\
&  \geq p^{2}l^{\left(  p-1\right)  /p}\min\limits_{_{\substack{l\leq
\left\vert x^{p}\right\vert \leq1\\\left\vert \mathcal{D}_{n}\left(
x^{p}\right)  \right\vert \leq h}}}\left\vert \mathcal{D}_{n}^{\prime}\left(
x^{p}\right)  \right\vert \\
&  =p^{2}l^{\left(  p-1\right)  /p}\min\limits_{_{\substack{l\leq\left\vert
x\right\vert \leq1\\\left\vert \mathcal{D}_{n}\left(  x\right)  \right\vert
\leq h}}}\left\vert \mathcal{D}_{n}^{\prime}\left(  x\right)  \right\vert \geq
p^{2}l^{\left(  p-1\right)  /p}\min\limits_{_{\substack{\left\vert
x\right\vert \leq1\\\left\vert \mathcal{D}_{n}\left(  x\right)  \right\vert
\leq h}}}\left\vert \mathcal{D}_{n}^{\prime}\left(  x\right)  \right\vert
\end{align*}
Since $l>0$, for sufficiently large $p$, we have%
\[
p^{2}l^{\left(  p-1\right)  /p}\min\limits_{\substack{\left\vert x\right\vert
\leq1\\\left\vert \mathcal{D}_{n}\left(  x\right)  \right\vert \leq
h}}\left\vert \mathcal{D}_{n}^{\prime}\left(  x\right)  \right\vert
>\max\limits_{\left\vert x\right\vert \leq1}\left\vert \mathcal{D}_{n}%
^{\prime}\left(  x\right)  \right\vert
\]
Hence, for sufficiently large $p,$ we have%
\[
\min\limits_{_{\substack{b\leq\left\vert x\right\vert \leq1\\\left\vert
\mathcal{D}_{n}\left(  x^{p}\right)  \right\vert \leq h}}}\left\vert
\mathcal{H}_{n,p}^{\prime}\left(  x\right)  \right\vert >\max
\limits_{\left\vert x\right\vert \leq1}\left\vert \mathcal{D}_{n}^{\prime
}\left(  x\right)  \right\vert .
\]

\end{enumerate}
\end{proof}

\begin{proof}
[Rigorous Proof of Theorem~\ref{thm:rec}] Let $n$ is a product of distinct odd
prime numbers. Assume that $\Phi_{n}^{\prime}\left(  x\right)  $ has $N_{n}$
real roots and that each of them is simple. Let $p$ be prime not diving and
$n$ and sufficiently large. Recalling Definition~\ref{def:narrow} and
Lemma~\ref{lem:narrow}, let $h$ be narrow with respect to $n$. Recalling
Definition~\ref{def:separate} and Lemma~\ref{lem:separate}, let $p$ be
sufficiently large so that the graphs of $\mathcal{D}_{n}\left(  x\right)  $
and $\mathcal{H}_{n,p}\left(  x\right)  \ $are well-configured.
Lemma~\ref{lem:separate}-\ref{a<b}, we have $a<b$. From Gauss-Lucas theorem,
all the real roots of $\Phi_{np}^{\prime}$ lie in $\left[  -1,1\right]  $. We
will divide the interval $\left[  -1,1\right]  $ into three regions and
examine the real non-zero roots of $\mathcal{D}_{np}\left(  x\right)  $ in
each region.

\begin{enumerate}
\item $\left\vert x\right\vert \leq a$

We consider two sub-regions.

\begin{enumerate}
\item $\left\vert \mathcal{D}_{n}\left(  x\right)  \right\vert >h$

From Definition \ref{def:separate}-\ref{a1}, we have $\left\vert
\mathcal{H}_{n,p}\left(  x\right)  \right\vert <h$. From Lemma \ref{lem:DH},
we have $\mathcal{D}_{np}\left(  x\right)  =\mathcal{H}_{n,p}\left(  x\right)
-\mathcal{D}_{n}\left(  x\right)  $. Hence there is no real root of
$\mathcal{D}_{np}$.

\item $\left\vert \mathcal{D}_{n}\left(  x\right)  \right\vert \leq h$

This sub-region consists of several disjoint intervals. From Definition
\ref{def:narrow}, we see that each interval contains exactly one root of
$\mathcal{D}_{n}$. From Definition \ref{def:separate}-\ref{a1},\ref{a2}, we
see that each interval contains exactly one root of $\mathcal{D}_{np}$, which
is simple.
\end{enumerate}

\item $a<\left\vert x\right\vert <b$

We consider two cases.

\begin{enumerate}
\item $k$ is odd.

From Definition \ref{def:separate}-\ref{ab1}, we see immediately that there is
exactly one root of $\mathcal{D}_{np}$ in this sub-region, which is positive
and simple.

\item $k$ is even.

From Definition \ref{def:separate}-\ref{ab2}, we see immediately that there is
exactly one root of $\mathcal{D}_{np}$ in this sub-region, which is negative
and simple.
\end{enumerate}

\item $b\leq\left\vert x\right\vert \leq1$

We consider two sub-regions.

\begin{enumerate}
\item $\left\vert \mathcal{D}_{n}\left(  x^{p}\right)  \right\vert >h$

From Definition \ref{def:separate}-\ref{b1}, we see immediately that there is
no root of $\mathcal{D}_{np}$ in this sub-region.

\item $\left\vert \mathcal{D}_{n}\left(  x^{p}\right)  \right\vert \leq h$

This sub-region consists of exactly several disjoint intervals. From
Definition \ref{def:separate}-\ref{b1},\ref{b2}, we see that each interval
contains exactly one root of $\mathcal{D}_{np}$, which is simple.
\end{enumerate}
\end{enumerate}

\noindent\noindent Put all the above and recalling Lemma \ref{lem:equiv}, we
conclude that $\Phi_{np}^{\prime}\left(  x\right)  $ has $2N_{n}+1$ real
roots, that is, $N_{np}=2N_{n}+1$ and that each real root is simple.
\end{proof}

\section{Conjectures}

\label{sec:conj}

In this section, we list several conjectures closely related to the main
result, as open challenges. They are suggested by numerous computations and
the proof of the main result.

\begin{conjecture}
[Counting]We have

\begin{enumerate}
\item $2k-1\leq N_{n}\leq2^{k}-1$

\item $N_{n}=2k-1$ if $n$ is the product of the $k$ smallest odd primes.

\item Let $p<p^{\prime}$ be primes not dividing $n$. Then we have $N_{np}\leq
N_{np^{\prime}}$.
\end{enumerate}
\end{conjecture}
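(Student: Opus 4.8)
The plan is to attack all three parts through the intersection picture underlying Theorem~\ref{thm:rec}: by Lemma~\ref{lem:equiv} and Lemma~\ref{lem:DH}, $N_{np}$ equals the number of nonzero intersections in $(-1,1)$ of the graphs of $\mathcal{D}_n$ and $\mathcal{H}_{n,p}$, and the entire difficulty is that the well-configuredness of Lemma~\ref{lem:separate} is only guaranteed for large $p$. Thus the program is to replace those large-$p$ estimates by bounds valid for \emph{every} prime $p\nmid n$, and in one case by a continuous deformation in the exponent.

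\textbf{Upper bound.} First I would prove $N_{np}\le 2N_n+1$ for all $p\nmid n$ and induct from $N_{p_1}=1$ (Proposition~\ref{pro:ic}) to obtain $N_n\le 2^k-1$. The target $2N_n+1$ is exactly the count that arises when each root of $\mathcal{D}_n$ contributes one nearby crossing, its image under the correspondence of Lemma~\ref{lem:relation} contributes one more, and the flat central region contributes a single extra crossing; so the goal is to show these are the only crossings that can ever occur. Concretely, I would partition $(-1,1)$ by the zeros and critical points of $\mathcal{D}_n$ together with their $p$-th-root preimages, and bound the crossings on each monotone cell by a Rolle / sign-variation argument applied to $\mathcal{D}_{np}=\mathcal{H}_{n,p}-\mathcal{D}_n$, using the sign data of Lemma~\ref{lem:shapes}. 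The obstacle is precisely uniformity in $p$: the flatness-near-$0$ and stiffness-near-$\pm1$ estimates used for large $p$ degrade, so one cannot simply localize one crossing per feature; a genuinely global crossing bound (for instance an argument-principle count of the sign changes of $\mathcal{D}_{np}$ on $(-1,1)$) seems to be required.

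\textbf{Lower bound and equality.} For $N_n\ge 2k-1$ I would aim at the companion inequality $N_{np}\ge N_n+2$ valid for every $p\nmid n$; iterating from $N_{p_1}=1$ then yields $N_n\ge 2k-1$, consistent with the parity proposition forcing $N_n$ to be odd and hence the induction steps to change by an even amount. The two guaranteed new roots should be the extra central crossing near $0$ (forced by the flatness of $\mathcal{H}_{n,p}$ there against the nonzero slope of $\mathcal{D}_n$, Lemma~\ref{lem:shapes}) together with one further robust crossing near an outermost root of $\mathcal{D}_n$; the key is to show that at least these two survive even when $p$ is the \emph{next} prime and the graphs are far from well-configured. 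For part 2 I would then show that for the $k$ smallest odd primes these inequalities are tight, i.e.\ $N_{p_1\cdots p_j}=N_{p_1\cdots p_{j-1}}+2$, by refining the cell-by-cell count to prove that exactly two crossings are born and none elsewhere. This is the hardest of the three, because the smallest primes are the extreme opposite of "sufficiently separated," so no perturbative estimate applies and one needs sharp two-sided control.

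\textbf{Monotonicity.} For $N_{np}\le N_{np'}$ with $p<p'$ I would interpolate the odd integer exponent to a continuous parameter $t\ge 1$ via $x^{t}:=\operatorname{sgn}(x)\,|x|^{t}$ (smooth away from $0$ and sign-preserving, as for odd primes) and study the family $\mathcal{D}_{n,t}(x):=t\,\mathcal{D}_n(x^{t})-\mathcal{D}_n(x)$, whose zero count at $t=p$ and $t=p'$ is $N_{np}$ and $N_{np'}$. The goal is to show that the number of zeros in $(-1,1)$ is nondecreasing in $t$: zeros cannot escape to the endpoints (boundary control at $\pm1$ from the Gauss--Lucas confinement and the sign data of Lemma~\ref{lem:shapes}) nor to $0$, and a pair of zeros can only be created, never annihilated, as $t$ increases. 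Establishing this no-death property—equivalently, exhibiting a monotone quantity or ruling out the tangency-then-separation that would destroy a crossing—is the principal obstacle, and it is exactly what would govern the passage from the minimal count $2k-1$ up to the saturated count $2^k-1$ of Theorem~\ref{thm:separated primes}.
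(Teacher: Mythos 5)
This statement is one of the paper's explicitly open conjectures: the paper gives no proof of it at all, only the table of computed values of $N_n$ as supporting evidence, so there is nothing to compare your argument against; the only question is whether your proposal itself establishes the statement, and it does not. Each of your three steps reduces the conjecture to an intermediate claim that is then left unproven, and in each case that intermediate claim is essentially the entire difficulty. For the upper bound you need $N_{np}\le 2N_n+1$ for \emph{every} prime $p\nmid n$, but the paper's machinery (Lemma~\ref{lem:shapes}, Definition~\ref{def:separate}, Lemma~\ref{lem:separate}) produces the count $N_{np}=2N_n+1$ only after discarding all small $p$, since well-configuredness is a large-$p$ property; you acknowledge that the flatness and stiffness estimates degrade for small $p$, and the ``global sign-variation / argument-principle count'' you invoke as a substitute is never formulated, let alone carried out. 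For the lower bound you need $N_{np}\ge N_n+2$ for every $p$, again asserted as a goal with no mechanism, and part 2 requires the exact equality $N_{p_1\cdots p_j}=N_{p_1\cdots p_{j-1}}+2$ for the \emph{smallest} odd primes --- the regime maximally far from ``sufficiently separated'' --- where, as you yourself note, no perturbative estimate applies; no replacement for perturbation theory is offered.

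The interpolation idea for part 3 is the most interesting piece: setting $x^t:=\operatorname{sgn}(x)\,\lvert x\rvert^{t}$ and $\mathcal{D}_{n,t}(x):=t\,\mathcal{D}_n(x^{t})-\mathcal{D}_n(x)$ does specialize, via Lemma~\ref{lem:DH}, to $\mathcal{D}_{np}$ at odd integers $t=p$, so monotonicity of the zero count in $t$ would indeed give $N_{np}\le N_{np'}$. But that monotonicity is precisely the conjecture in disguise. In a one-parameter family of real functions, zeros are generically both created and destroyed through tangencies; controlling the boundary behavior at $\pm1$ and at $0$ (via Gauss--Lucas and Lemma~\ref{lem:shapes}) does nothing to exclude an interior tangency at which two crossings annihilate as $t$ increases. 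You name this ``no-death property'' as the principal obstacle, but you exhibit no monotone quantity, no interlacing structure, and no variational argument that would enforce it. So the proposal is a reasonable research plan --- its reductions are consistent with the parity proposition and with the paper's computational data --- but all three parts still rest on unproven core claims, and the conjecture remains open.
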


\begin{example}
We provide supporting evidence for the above conjecture, by listing several
direct computational results.
\[%
\begin{array}
[c]{|ccc|l|c|}\hline
k & 2k-1 & 2^{k}-1 & n & N_{n}\\\hline
1 & 1 & 1 & 3 & 1\\\hline
2 & 3 & 3 & 3\cdot5 & 3\\\hline
3 & 5 & 7 & 3\cdot5\cdot7 & 5\\
&  &  & 3\cdot5\cdot59 & 7\\\hline
4 & 7 & 15 & 3\cdot5\cdot7\cdot11 & 7\\
&  &  & 3\cdot5\cdot7\cdot61 & 9\\
&  &  & 3\cdot5\cdot7\cdot107 & 11\\
&  &  & 3\cdot5\cdot59\cdot541 & 13\\
&  &  & 3\cdot5\cdot59\cdot647 & 15\\\hline
\end{array}
\]

\end{example}

\begin{conjecture}
[Locating]\ Suppose that the primes are sufficiently\ separated. Then we have%
\[
\left\{  \alpha\in\mathbb{R\ }:\ \Phi_{n}^{\prime}\left(  \alpha\right)
=0\right\}  \ \ \ \approx\ \ \ \ \left\{  \beta_{i}\ :\ 1\leq i_{1}%
<\cdots<i_{s}\leq k,\ \,s\geq1\right\}
\]
where
\[
\beta_{i}=\left(  -1\right)  ^{i_{1}}\left\vert \gamma_{i_{1}}\right\vert
^{\frac{1}{p_{i_{2}}\cdots p_{i_{s}}}}%
\]
and $\gamma_{i_{1}}$ is the negative real root of $\Phi_{p_{i_{1}}}$.
\end{conjecture}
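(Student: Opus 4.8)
The plan is to prove the conjecture by the same induction on the number of primes $k$ that drives Theorem~\ref{thm:rec}, but strengthened so that it tracks the \emph{locations} of the roots and not merely their number. Concretely, I would carry the inductive hypothesis that the real roots of $\Phi_n'$ (equivalently, the nonzero roots of $\mathcal{D}_n$) are indexed by the nonempty subsets $S=\{i_1<\cdots<i_s\}\subseteq\{1,\dots,k\}$ through a bijection $S\mapsto\alpha_S$ with $\alpha_S\approx\beta_S$, where $\approx$ is given the precise nested meaning of Remark~\ref{rem:formal}: for every $\varepsilon>0$, once the primes are separated enough, $|\alpha_S-\beta_S|<\varepsilon$ for all $S$ at once. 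The base case $k=1$ is exact, since by Proposition~\ref{pro:ic} the polynomial $\Phi_{p_1}'$ has the single (negative) root $\gamma_1$, and $\beta_{\{1\}}=(-1)^1|\gamma_1|=\gamma_1$.

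For the inductive step I would append the new largest prime $p=p_{k+1}$ and locate the three families of roots of $\mathcal{D}_{np}$ produced in the proof of Theorem~\ref{thm:rec}, using $\mathcal{D}_{np}(x)=\mathcal{H}_{n,p}(x)-\mathcal{D}_n(x)=p\mathcal{D}_n(x^p)-\mathcal{D}_n(x)$ (Lemma~\ref{lem:DH}). The roots near the ``blue dots'' (type (a)) are small perturbations of the roots $\alpha_S$ of $\mathcal{D}_n$ with $S\subseteq\{1,\dots,k\}$: since $\mathcal{H}_{n,p}$ is uniformly tiny and flat there, the intermediate value theorem together with the derivative bounds of Definition~\ref{def:separate}-\ref{a1},\ref{a2} keeps the new root within $o(1)$ of $\alpha_S$, hence of $\beta_S$, and these realise exactly the subsets not containing $k+1$. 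The roots near the ``green dots'' (type (b)) lie near the preimages $\alpha_S^{1/p}$ of the old roots under $x\mapsto x^p$ (Definition~\ref{def:separate}-\ref{b1},\ref{b2}); because $p$ is odd the real $p$-th root preserves sign, so
\[
\alpha_S^{1/p}\ \approx\ \beta_S^{1/p}\ =\ (-1)^{i_1}|\gamma_{i_1}|^{1/(p_{i_2}\cdots p_{i_s}p)}\ =\ \beta_{S\cup\{k+1\}},
\]
which is precisely the value the conjecture assigns to $S\cup\{k+1\}$. Continuity of $x\mapsto x^{1/p}$ away from the origin turns $\alpha_S\approx\beta_S$ into $\alpha_S^{1/p}\approx\beta_S^{1/p}$, so types (a) and (b) together account for every subset of $\{1,\dots,k+1\}$ except the singleton $\{k+1\}$.

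The main obstacle is the lone extra root (type (c)), which must be shown to sit near $\beta_{\{k+1\}}=(-1)^{k+1}|\gamma_p|$. The proof of Theorem~\ref{thm:rec} only confines it to the band $a<|x|<b$ and proves simplicity; it says nothing about its position. To pin it down I would study $\mathcal{D}_{np}(x)=0$ directly there. In that band $x^p$ is small, so the expansion $\mathcal{D}_n(x^p)=(-1)^{k+1}x^p+O(x^{2p})$ (from $\mathcal{D}_n(0)=0$ and $\mathcal{D}_n'(0)=(-1)^{k+1}$, Lemma~\ref{lem:shapes}) reduces the equation to $(-1)^{k+1}p\,x^p\approx\mathcal{D}_n(x)$, while $\mathcal{D}_n(x)\to\mathcal{D}_n((-1)^{k+1})>0$ as $x\to(-1)^{k+1}$. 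Substituting $x=(-1)^{k+1}u$ with $u>0$ collapses this to $p\,u^{p}\approx\mathcal{D}_n((-1)^{k+1})$, whence the expected asymptotic $u=1-\frac{\ln p}{p}+o\!\left(\frac{\ln p}{p}\right)\to1$. Running the identical balance on the defining polynomial $g_p(x)=(p-1)x^p-px^{p-1}+1$ of $\gamma_p$ (Proposition~\ref{pro:ic}) gives $|\gamma_p|=1-\frac{\ln p}{p}+o\!\left(\frac{\ln p}{p}\right)$ as well; the two leading corrections cancel, leaving $u-|\gamma_p|=O(1/p)\to0$. Making this comparison rigorous — controlling the $n$-dependent constant $\mathcal{D}_n((-1)^{k+1})$ against the constant generated by $g_p$, and verifying that the $\frac{\ln p}{p}$ terms genuinely cancel so that the extra root tracks $|\gamma_p|$ rather than merely the common limit $(-1)^{k+1}$ — is where I expect the real difficulty to lie.

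Finally I would assemble the bookkeeping. The values $\beta_S$ for $S\subseteq\{1,\dots,k\}$ do not involve $p_{k+1}$, so the type-(a) errors are inherited from the hypothesis while the type-(b) and type-(c) errors are freshly created and forced to $0$ by taking $p_{k+1}$ large; the delicate point is that all these bounds must be made uniform in $S$ and threaded through the nested quantifier order of Remark~\ref{rem:formal}, so that one choice of separation thresholds simultaneously controls every root. Granting the type-(c) localisation, the bijection $S\mapsto\alpha_S$ with $\alpha_S\approx\beta_S$ then passes from $k$ to $k+1$, and with the exact base case this yields the claimed set-level approximation.
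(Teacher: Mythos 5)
You should first be clear about the status of this statement: it is not a theorem of the paper. It appears in Section~\ref{sec:conj} as the ``Locating'' conjecture, offered explicitly as an open challenge supported only by the numerical table that follows it. So there is no proof in the paper to compare your proposal against; it has to stand or fall on its own. On its own terms, much of your bookkeeping is right and worth recording: the base case $k=1$ is exact; the identity $\beta_S^{1/p}=\beta_{S\cup\{k+1\}}$ (valid because $p$ is odd) correctly matches the ``green-dot'' roots of $\mathcal{D}_{np}$ to the subsets containing the new prime; the extra root produced in Theorem~\ref{thm:rec} has sign $(-1)^{k+1}$, agreeing with $\beta_{\{k+1\}}$; and you silently repair a typo in the statement ($\gamma_{i_1}$ must be the real root of $\Phi_{p_{i_1}}'$, not of $\Phi_{p_{i_1}}$, which has no real roots). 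Together these show the conjectured set has exactly the recursive structure $2N_n+1$ that the counting theorem produces, which is surely why the authors believe it.

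The genuine gap, however, is larger than the one place you flag. The root of the problem is that you never fix a meaning of $\approx$ strong enough to make the statement non-vacuous, and the two candidate meanings pull your argument in opposite directions. Under your stated reading (absolute error $<\varepsilon$ after sufficient separation), the type-(c) claim, and in fact the claim for \emph{every} $\beta_S$ with $|S|\ge 2$, is nearly empty: as the primes separate, those $\beta_S$ and the corresponding roots all tend to $\pm1$, so ``difference $\to 0$'' does not distinguish the conjectured formula from the constant $(-1)^{i_1}$. The content visible in the paper's table is relative: $1-|\alpha|$ and $1-|\beta|$ agree to leading order. But once you adopt that reading, the machinery you cite for types (a) and (b) no longer suffices as cited: the well-configured framework (Definition~\ref{def:separate}) localizes a green-dot root only to the interval where $\left|\mathcal{D}_n\left(x^p\right)\right|\le h$, whose width is of order $h/p$ with $h$ fixed in advance, while the target scale $1-\left|\beta_{S\cup\{k+1\}}\right|$ is itself of order $1/p$; so the localization error is a fixed multiple of the quantity you are trying to resolve, and you would need to let $h$ shrink with the desired relative accuracy (and re-thread that choice through the nested quantifiers of Remark~\ref{rem:formal}) rather than simply invoke the existing lemmas. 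Finally, the type-(c) computation is a two-term heuristic balance: the $O\!\left(x^{2p}\right)$ remainder in $\mathcal{D}_n\left(x^p\right)$, the variation of $\mathcal{D}_n$ between $(-1)^{k+1}u$ and $(-1)^{k+1}$, and the $n$-dependent constants $\mathcal{D}_n(\pm1)$ all enter at exactly the $O(1/p)$ order you need to control, and none is estimated. What you have is a credible and well-organized plan for attacking an open problem, with the correct combinatorial skeleton; it is not a proof, and the unproved core is precisely the analytic localization that the conjecture's $\approx$ is really about.
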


\begin{example}
Let $n=3\cdot23\cdot193$. Then we have%
\[%
\begin{array}
[c]{|c|c|c|c|}\hline
\alpha\ \ \text{(real root of }\Phi_{n}^{\prime}\text{)} & \beta
_{i}\ \ \text{(approximation)} & \beta_{i}/\alpha & i\\\hline
-0.49999 & -0.50000 & 1.00002 & \left(  1\right) \\
+0.86749 & +0.84414 & 0.97308 & \left(  2\right) \\
-0.97740 & -0.96954 & 0.99196 & \left(  3\right) \\
-0.98044 & -0.97031 & 0.98967 & \left(  1,2\right) \\
-0.99550 & -0.99641 & 1.00091 & \left(  1,3\right) \\
+0.99929 & +0.99912 & 0.99983 & \left(  2,3\right) \\
-0.99985 & -0.99984 & 0.99999 & \left(  1,2,3\right) \\\hline
\end{array}
\]
where the real roots are ordered by their absolute values.
\end{example}

\begin{conjecture}
[Irreducibility]Let $n$ be square-free. Then $\Phi_{n}^{\prime}$ is
irreducible over $\mathbb{Q}$.
\end{conjecture}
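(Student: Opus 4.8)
The plan is to treat the prime case, which yields to a classical criterion, separately from the composite case, which I expect to be the genuine difficulty. For the base case $n=p$, I use that shifting the argument by $1$ commutes with differentiation. Starting from
\[
\Phi_{p}(x+1)=\frac{(x+1)^{p}-1}{x}=\sum_{i=0}^{p-1}\binom{p}{i+1}x^{i},
\]
I differentiate to obtain
\[
\Phi_{p}'(x+1)=\sum_{m=0}^{p-2}(m+1)\binom{p}{m+2}x^{m}.
\]
The leading coefficient (at $m=p-2$) is $(p-1)\binom{p}{p}=p-1$, which is prime to $p$; every lower coefficient carries a factor $\binom{p}{j}$ with $2\le j\le p-1$, hence is divisible by $p$; and the constant term $\binom{p}{2}=\tfrac{1}{2}p(p-1)$ has $p$-adic valuation exactly $1$. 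Thus $\Phi_{p}'(x+1)$ is Eisenstein at $p$, so $\Phi_{p}'$ is irreducible over $\mathbb{Q}$.

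For composite square-free $n=p_{1}\cdots p_{k}$ the only structural handle I see is reduction modulo the primes dividing $n$. From the fundamental relation $\Phi_{n}(x)=\Phi_{n/p_{i}}(x^{p_{i}})/\Phi_{n/p_{i}}(x)$ and the Frobenius congruence $\Phi_{n/p_{i}}(x^{p_{i}})\equiv\Phi_{n/p_{i}}(x)^{p_{i}}\pmod{p_{i}}$, one gets $\Phi_{n}\equiv\Phi_{n/p_{i}}^{\,p_{i}-1}\pmod{p_{i}}$, and differentiating gives
\[
\Phi_{n}'\equiv-\,\Phi_{n/p_{i}}^{\,p_{i}-2}\,\Phi_{n/p_{i}}'\pmod{p_{i}}.
\]
The squarefree structure of the right-hand side is classical — the irreducible factors of $\Phi_{n/p_{i}}$ modulo $p_{i}$ all have degree equal to the order of $p_{i}$ modulo $n/p_{i}$ — so each congruence restricts the reduction modulo $p_{i}$ of any hypothetical $\mathbb{Z}$-factor of $\Phi_{n}'$ to a product of these known pieces together with factors of $\Phi_{n/p_{i}}'$. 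I would then intersect the degree constraints coming from all $i$, and if necessary from an auxiliary prime $q\nmid\varphi(n)$ chosen so that the leading coefficient survives the reduction, in the hope of forcing every proper factor to have degree $\varphi(n)-1$ and thereby ruling out a nontrivial split.

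The hard part, and where I expect the argument to stall, is that these reductions are intrinsically very reducible — each is a high power times a derivative — so no single reduction can certify irreducibility, and combining them need not either: already for $n=3\cdot5$ the reduction is an irreducible quartic times an irreducible cubic modulo $3$, and is $\Phi_{3}^{3}\Phi_{3}'$ modulo $5$, so a hypothetical $(4,3)$ factorization over $\mathbb{Q}$ is consistent with both. The clean Eisenstein mechanism of the prime case also fails to generalize: for $k\ge2$ the roots of $\Phi_{n/p_{i}}$ modulo $p_{i}$ are not rational unless $p_{i}\equiv1\pmod{n/p_{i}}$, so no rational shift concentrates the $p_{i}$-adic picture at a single point. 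Most fundamentally, the roots of $\Phi_{n}'$ are the critical points of $\Phi_{n}$ and carry no Galois action, so the entire apparatus that proves $\Phi_{n}$ itself irreducible is unavailable. A successful proof will therefore likely require either a genuinely new algebraic identity for $\Phi_{n}'$, or a Newton-polygon and ramification analysis pinning down the $p$-adic valuations of these non-arithmetic critical points; supplying that is exactly the obstacle I anticipate.
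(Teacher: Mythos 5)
The statement you are asked to prove is listed in the paper as a \emph{conjecture}: the authors give no proof of it, and indeed they present it explicitly as an open challenge (noting only that, if true, it would simplify their main argument by making the transversality analysis unnecessary). So there is no proof in the paper to compare yours against; the only question is whether your proposal actually establishes the statement, and it does not. Your treatment of the prime case is correct and clean: the computation $\Phi_{p}'(x+1)=\sum_{m=0}^{p-2}(m+1)\binom{p}{m+2}x^{m}$ is right, the leading coefficient $p-1$ is prime to $p$, all lower coefficients are divisible by $p$ because $p\mid\binom{p}{j}$ for $2\le j\le p-1$, and the constant term $\binom{p}{2}=\tfrac{1}{2}p(p-1)$ has $p$-adic valuation exactly $1$, so Eisenstein applies and $\Phi_{p}'$ is irreducible. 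That is a genuine (if modest) piece of the conjecture.

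The gap is everything else: the conjecture concerns all square-free $n$, and for composite $n=p_{1}\cdots p_{k}$ with $k\ge2$ your proposal proves nothing --- as you candidly acknowledge. Your reduction $\Phi_{n}'\equiv-\,\Phi_{n/p_{i}}^{\,p_{i}-2}\Phi_{n/p_{i}}'\pmod{p_{i}}$ is correct, and your $n=3\cdot5$ example accurately shows why it is insufficient: modulo $3$ one gets an irreducible quartic times an irreducible cubic, modulo $5$ one gets $(x^{2}+x+1)^{3}(2x+1)$ up to a unit, and both reductions are consistent with a hypothetical $(4,3)$ splitting over $\mathbb{Q}$, so no combination of these congruences can certify irreducibility. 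What you have written is therefore a partial result (the case $k=1$) together with an honest obstruction analysis, not a proof of the statement; the composite case --- which is the actual content of the conjecture, since the paper's whole subject is $n$ a product of several primes --- remains open, exactly as it does in the paper.
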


\noindent If the above conjecture is true, then the proof of the main result
of this paper could be simplified, since we would not need to show that the
intersections between the graphs of $\mathcal{D}_{n}$ and $\mathcal{H}_{n,p}$
are transversal.

\bibliographystyle{plain}
\bibliography{allrefs}

\end{document}